%% file: main.tex
\documentclass[10pt,twoside,reqno]{amsart}
\usepackage[all]{xy}
        \usepackage {amssymb,latexsym,amsthm,amsmath,mathtools,dsfont,mathrsfs}
        \usepackage{enumitem,color}

\usepackage[
  top=1in,
  bottom=1in,
  left=1.2in,
  right=1.2in
]{geometry}
\usepackage{makecell}
\usepackage{mathtools}

\usepackage{longtable}
\usepackage{framed}

\usepackage[backend=bibtex,backref=true,maxbibnames=999,doi=false,isbn=false,url=false
]{biblatex} 
\usepackage{float}
\usepackage{hyperref}
\long\def\symbolfootnote[#1]#2{\begingroup%
\def\thefootnote{\fnsymbol{footnote}}\footnote[#1]{#2}\endgroup}

\makeatletter
\def\imod#1{\allowbreak\mkern10mu({\operator@font mod}\,\,#1)}
\makeatother
\makeatletter
\renewcommand*\env@matrix[1][*\c@MaxMatrixCols c]{%
  \hskip -\arraycolsep
  \let\@ifnextchar\new@ifnextchar
  \array{#1}}
\makeatother
\usepackage[capitalize,nameinlink]{cleveref} 
\usepackage{comment} 
\usepackage{xcolor} 
\hypersetup{
    colorlinks,
    linkcolor={red!80!black},
    citecolor={green!80!black},
    urlcolor={blue!80!black}
}

\newtheorem{theorem}{Theorem}[section]
\newtheorem{lemma}[theorem]{Lemma}
\newtheorem{corollary}[theorem]{Corollary}

\newtheorem*{theorem*}{Theorem}
\theoremstyle{definition}
\newtheorem{definition}[theorem]{Definition}
\newtheorem{remark}[theorem]{Remark}
\newtheorem{example}[theorem]{Example}
\newtheorem{problem}[theorem]{Problem}

\numberwithin{equation}{section}
\newcommand{\ignore}[1]{}

\newcommand{\mynote}[1]{}
\title[Hayashi Property for conjugation quandles]{Hayashi Property for conjugation quandles over finite Coxeter groups}
\author[Dilpreet Kaur]{Dilpreet Kaur}
\email{dilpreetkaur@iitj.ac.in}
\address{Indian Institute of Technology Jodhpur
N.H. 62, Nagaur Road, Karwar Jodhpur 342030
Rajasthan}
\author[Uday Bhaskar Sharma]{Uday Bhaskar Sharma}
\email{udaybhaskar.sharma@ddn.upes.ac.in}
\address{School of Advanced Engineering, UPES, Dehradun -248007, India}
\author[Pushpendra Singh]{Pushpendra Singh}
\email{pushpendra@iisermohali.ac.in}
\address{Indian Institute of Science Education and Research Mohali, Sector 81, Mohali 140306, India}
\thanks{}
\date{\today}
\subjclass[2020]{20B30,20N02,20F55,20E45,57K12}
\keywords{Finite Coxeter groups, Conjugacy Classes, Hayashi property, Conjugation quandles}
\begin{document}
\setcounter{section}{0}
\begin{abstract}
In this article, we show that for all finite irreducible Coxeter groups $G$ with a conjugation closed subset $C$, the conjugation quandle $\rm{Conj}(G,C)$ satisfies the Hayashi property.
\end{abstract}
\maketitle
\input{prelim}

\input{Sn}

\input{Exceptional}
\printbibliography
\end{document}

%% file: prelim.tex
\section{Introduction}
Quandles are algebraic structures introduced in \cite{Joy82a, Mat82} to construct classifying invariant of knots and links. Quandles have applications in various parts of algebra, such as set-theoretical solutions of the Yang-Baxter equations \cite{drinfeld2006some}, Hopf algebras \cite{andruskiewitsch2003racks}, construction of powerful invariants using quandle cohomology \cite{carter2003quandle}, etc. A quandle can be understood in a purely combinatorial way as an algebraic structure that is idempotent, and right translation maps are automorphisms. The applications of quandles motivate the development of a rich algebraic theory of quandles, such as the classification of finite simple quandles \cite{Joy82b}, the enumeration of quandles of up to order 14\cite{vojtvechovsky2019enumeration}, commutator theory of quandles\cite{bonatto2021commutator}, the enumeration of connected quandles of order up to 47\cite{hulpke2016connected}, etc. 

For a finite quandle, the set of cycle structures of all right translation maps is called a \textit{profile}. The profile of finite quandles has been studied extensively in the literature. We refer to \cite{ Lages_Lopes_2019,lages2024conjecture} for related work. We call a permutation $\sigma \in S_n$ a regular permutation if its disjoint cycle decomposition contains a cycle with length equal to the order $o(\sigma)$. Hayashi in \cite{Hay13} conjectured that all right translation maps of a connected quandle are regular permutations. The conjecture is proven for connected quandles with profile length at most five in \cite{lages2024conjecture}. Furthermore, the authors in \cite{Spa25} reduce the conjecture to the class of faithful connected quandles. The property of this conjecture is generalised for all finite quandles in \cite{Filip} by the name of \textit{Hayashi property} and studied for the conjugation quandles ${\rm Conj}(G,C)$ for a conjugacy class $C$ of group $G$.

In this article, we study the profile and, therefore, the Hayashi property of conjugation quandles on finite irreducible Coxeter groups. We prove that ${\rm Conj}(G,C)$ satisfies the Hayashi property for all finite irreducible Coxeter groups $G$ and conjugacy closed subset $C \subseteq G$. The article is organized as follows. In section \ref{section.prelim}, we introduce quandles and provide preliminary theory. In section \ref{section.B_n}, we prove our main result for finite Coxeter groups of type $A_n, I_2(n), B_n, C_n$ and $D_n$. In section \ref{section.exceptional}, we prove our main result for the finite Coxeter groups of type $E_6, E_7, E_8, F_4, H_3$ and $H_4$.

\section{Preliminaries}\label{section.prelim}

\begin{definition}
A quandle is a set $X$ equipped with a binary operation $\triangleright
$ satisfying the following axioms:
\begin{itemize}
    \item[\rm{(i)}.] $x\triangleright x=x$ for all $x\in X$
    \item[\rm{(ii)}.] The mapping $R_x:X\to X, y\mapsto y\triangleright x$ is an automorphism of $(X,\triangleright)$ for all $y \in X$.
\end{itemize}
\end{definition}
Equivalently, we can say that a quandle is an idempotent, right self-distributive, right quasigroup. If we exclude the axiom \rm{(i)}, then $(X,\triangleright)$ is known as \textit{rack}. A quandle is called a \textit{kei} if $R_x$ has order $2$ for all $x \in X$.

In the following, we give some examples of quandles.
\begin{itemize}
\item 
Affine quandle: Let $A$ be an abelian group and $f\in {\rm Aut}(A)$. The affine quandle ${\rm Aff}(A,f)$ is the set $A$ with binary operation $x\triangleright y=f(x)+(1-f)(y)$ for all $x,y\in A$.
\item Projection quandle: A set $X$ with binary operation $x\triangleright y=x$ is known as the projection quandle.
\item 
Conjugation quandle: Let $G$ be a group and $C$ be the conjugation closed subset of $G$. Then the conjugation quandle ${\rm Conj}(G,C)$ is the set $C$ with binary operation $x\triangleright y=yxy^{-1}$. If $C=G$, then we use notation ${\rm Conj}(G)$.
\item 
Coset quandle: Let $G$ be a group, $f\in {\rm Aut}(G)$, $H$ be a subgroup of $C_{G}(f)=\{g\in G: f(g)=g\}$. Then the coset quandle $Q(G,H,f)$ is the set of left cosets $G/H$ with binary operation $xH\triangleright yH=yf(y^{-1}x)H$.
\end{itemize}
In the above examples, we note that quandles are closely associated with groups. Joyce in \cite{Joy82a} studied various ways to represent a quandle using groups. In fact, he proved that every quandle is isomorphic to the disjoint union of coset quandles \cite[Theorem 7.2]{Joy82a}. 

For a quandle $(X,\triangleright)$, we can construct another quandle $(X,\triangleright^{-1})$ with $x\triangleright^{-1}y=R_y^{-1}(x)$. This is known as the \textit{dual} of $X$. For example, the \textit{dual} of ${\rm Conj}(G,C)$ is $(C,\triangleright^{-1})$ where $x\triangleright^{-1}y=y^{-1}xy$. 

We use ${\rm Sym}(X)$ to denote the symmetric group on the set $X$. Given a quandle $(X,\triangleright)$, we can construct a subgroup ${\rm RMlt}(X)$ of ${\rm Sym}(X)$ generated by the set of right translations. 
$${\rm RMlt}(X)=\langle R_x:x\in X \rangle \leq {\rm Sym}(X)$$
The group ${\rm RMlt}(X)$ acts on the quandle $(X,\triangleright)$ in a natural way. If this action is transitive, then the quandle is called a connected quandle. Furthermore, if the quandle homomorphism $X \to {\rm Conj}(G), x \mapsto R_x$ is injective, then $(X,\triangleright)$ is called a faithful quandle.

\begin{definition}
Let $(X,\triangleright)$ be a quandle. Let $\lambda_1^{\alpha_1}\lambda_2^{\alpha_2}\hdots\lambda_t^{\alpha_t}$ be the cycle structure of $R_x \in {\rm RMlt}(X)$ with $\lambda_i<\lambda_j$ for $i<j$. Let $L_x$ denote the set $\{\lambda_1,\lambda_1,\hdots,\lambda_1,\lambda_2,\lambda_2,\hdots,\lambda_2,\hdots,\lambda_t,\lambda_t\hdots,\lambda_t\}$. Then the set $\{L_x: x\in X\}$ is called the profile of a quandle.
\end{definition}

\begin{example}
Let $(X,\triangleright)$ be the quandle with the following table.
$$
\begin{array}{c|c c c c c} 
\triangleright \ &  1 & 2 & 3 & 4 & 5\\ \hline
\ 1 &  1 & 1 & 1 & 1 & 1 \\
\ 2 &  3 & 2 & 2 & 3 & 3 \\
\ 3 &  2 & 3 & 3 & 2 & 2 \\
\ 4 &  5 & 4 & 4 & 4 & 4 \\
\ 5 &  4 & 5 & 5 & 5 & 5 \\

\end{array}$$
Then ${\rm Prof}(X)=\{ \{1,2,2\},\{1,1,1,1,1\},\{1,1,1,1,1\},\{1,1,1,2\}, \{1,1,1,2\} \}$.
\end{example}

We note that for a quandle $(X,\triangleright)$, we have $R_yR_{x}R_{y}^{-1}=R_{x\triangleright y}$ for all $x,y\in X$. Hence, if $(X,\triangleright)$ is a connected quandle, then all right translation maps have the same cycle structure as elements of ${\rm Sym}(X)$. Therefore, for a connected quandle, the profile is defined to be the set $L_x$ for each $x\in X$. 

Let ${\rm Conj}(G,C)$ be the conjugation quandle where $C$ is the conjugacy class of $G$. Then for $x,y\in C$ there exist $g\in G$ such that $y=gxg^{-1}$ and so $R_y=R_{gxg^{-1}}=\phi_g|_CR_x\phi_{g^{-1}}|_C$, where $\phi_g:G\to G,h\mapsto ghg^{-1}$ is the inner automorphism of $G$. This shows that all right translations have the same cycle structure. Therefore, we define the profile as the set $L_x$ for any $x\in C$.

\begin{definition} Let $S_n$ denote the symmetric group of degree $n.$
Let $\lambda_1^{\alpha_1}\lambda_2^{\alpha_2}\hdots\lambda_t^{\alpha_t}$ be the cycle structure of $ g \in S_n$ with $\lambda_i<\lambda_j$ for $i<j$. We say $g$ is a regular permutation if $\lambda_i$ divides $\lambda_t$ for all $1\leq i\leq t$.
\end{definition}

\begin{definition}
Let $(X,\triangleright)$ be a finite quandle. Then it is said to satisfy the Hayashi property if all right translation maps of $X$ are regular permutations. Equivalently, we say that for every $L_x=\{1,l_1,l_2,\hdots,l_{t_x}\}$  with $1\leq l_1\leq l_2\leq \hdots \leq l_{t_x}$, $l_{t_x}$ is a multiple of $l_i$ for every $1\leq i\leq t_x$.
\end{definition}

The above definition is introduced in \cite{Filip}. It generalizes the Hayashi conjecture \cite[Conjecture 1.1]{Hay13} which states that for a connected quandle with profile $\{1,l_1,l_2,\hdots,l_t\}$ with $1\leq l_1\leq l_2\leq \hdots \leq l_t$, $l_i$ divides $l_t$ for every $1\leq i\leq t$.

\begin{lemma}\cite[Corollary 4.2]{Filip} \label{filip}
Let $G$ be a finite group, and let $C$ be a conjugacy class of $G$. Then ${\rm Conj}(G,C)$ satisfies the Hayashi property if there exist $x,y\in C$ such that $\langle x \rangle \cap C_G(y) \subseteq Z(\langle C \rangle)$.
\end{lemma}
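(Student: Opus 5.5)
The plan is to analyse the cycle structure of a right translation $R_x$ directly through the cyclic group $\langle x\rangle$ acting on $C$ by conjugation. By definition $R_x(z)=xzx^{-1}$ for $z\in C$. So $R_x$ is the restriction to $C$ of the inner automorphism $\phi_x$, and the cyclic group $\langle R_x\rangle$ is the image of $\langle x\rangle$ acting on $C$ by conjugation.

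First I compute the cycle lengths. For $z\in C$, the cycle of $R_x$ containing $z$ is the $\langle x\rangle$-orbit of $z$. By orbit–stabilizer its length is
$$[\langle x\rangle : \langle x\rangle\cap C_G(z)].$$
All these lengths automatically divide the order of $R_x$, since they are orbit lengths of a cyclic group acting through $\langle R_x\rangle$.

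Next I compute the order of $R_x$. It is the smallest $k\ge 1$ with $x^k$ centralizing every element of $C$, that is, with $x^k\in C_G(\langle C\rangle)$. Since $x\in C\subseteq\langle C\rangle$, this condition is equivalent to $x^k\in Z(\langle C\rangle)$. Hence
$$o(R_x)=[\langle x\rangle:\langle x\rangle\cap Z(\langle C\rangle)].$$

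Now I use the hypothesis with the given $x,y\in C$. Every element of $Z(\langle C\rangle)$ commutes with $y\in C$, so $\langle x\rangle\cap Z(\langle C\rangle)\subseteq \langle x\rangle\cap C_G(y)$. The assumption gives the reverse inclusion, so the two subgroups are equal. Therefore the cycle of $R_x$ through $y$ has length exactly $o(R_x)$, and $R_x$ is a regular permutation: its largest cycle length equals its order, and every cycle length divides that order.

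Finally I pass from this one $x$ to every element of $C$. As noted before the definition of the profile, for $w=gxg^{-1}\in C$ we have $R_w=\phi_g|_C\,R_x\,\phi_{g^{-1}}|_C$. So $R_w$ has the same cycle type as $R_x$ and is also regular, and ${\rm Conj}(G,C)$ has the Hayashi property.

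The only step needing care is the identification of $o(R_x)$ with the index of $\langle x\rangle\cap Z(\langle C\rangle)$. The point is that $R_x$ acts trivially on $C$ exactly when $x^k$ centralizes the generating set $C$ of $\langle C\rangle$, and that this subgroup contains $x^k$.
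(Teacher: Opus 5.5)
Your proof is correct and complete. You use the two index formulas: the cycle of $R_x$ through $z$ has length $[\langle x\rangle:\langle x\rangle\cap C_G(z)]$, and $o(R_x)=[\langle x\rangle:\langle x\rangle\cap Z(\langle C\rangle)]$. Combined with the automatic inclusion $Z(\langle C\rangle)\subseteq C_G(y)$, they give a cycle of length exactly $o(R_x)$. Conjugation by $\phi_g|_C$ then carries regularity to every $R_w$ with $w\in C$.

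The paper gives no proof of this lemma; it imports it as \cite[Corollary 4.2]{Filip}. So there is no argument of its own to compare with. Your orbit--stabilizer computation is the natural self-contained justification. It is also the mechanism the paper uses informally in its theorem on unions of classes in $S_n$, where the longest cycle is identified with $o(x)$ and the other cycle lengths with indices of subgroups of $\langle x\rangle$.

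Two by-products of your argument are worth recording.

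First, $\langle x\rangle\cap Z(\langle C\rangle)\subseteq\langle x\rangle\cap C_G(z)$ holds for every $z\in C$. Hence your formulas show that $R_x$ is regular \emph{if and only if} some $y\in C$ satisfies $\langle x\rangle\cap C_G(y)\subseteq Z(\langle C\rangle)$. This converse is exactly what the paper uses without comment in the first line of the proof of Lemma \ref{union}, where the Hayashi property of ${\rm Conj}(G,C_i)$ is turned into such a pair $e,z$.

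Second, your computation of cycle lengths and of $o(R_x)$ never uses that $C$ is a single class. It therefore proves Lemma \ref{ue} verbatim, which the paper only describes as ``similar''. Since that lemma's hypothesis is imposed for every $x\in C$, your final conjugation-transfer step is not even needed there.
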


We note that the above lemma can be generalized for the union of conjugacy classes as well.

\begin{lemma} \label{ue}
Let $G$ be a finite group,  and $C=\bigcup\limits_{i}C_i$ be the union of distinct conjugacy classes of $G$. Then ${\rm Conj}(G,C)$ satisfies the Hayashi property if for all $x\in C$ there exists $y\in C$ such that $\langle x \rangle \cap C_G(y) \subseteq Z(\langle C \rangle)$.
\end{lemma}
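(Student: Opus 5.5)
We prove the lemma by reproducing the argument behind Lemma \ref{filip} one right translation at a time. Throughout, $Z$ denotes $Z(\langle C \rangle)$.

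Fix $x \in C$. Since $C$ is closed under conjugation, $R_x$ acts on $C$ by $z \mapsto xzx^{-1}$. The $R_x$-orbit of $z \in C$ has length equal to the least $k \geq 1$ with $x^k z x^{-k} = z$, that is, with $x^k \in C_G(z)$. So the orbit length is the index $[\langle x \rangle : \langle x \rangle \cap C_G(z)]$, and every cycle length of $R_x$ divides $o(x)$. The order of $R_x$ is the least $k$ such that $x^k$ commutes with every element of $C$. This happens exactly when $x^k \in C_G(\langle C \rangle)$. Since $x \in C \subseteq \langle C \rangle$, this is the same as $x^k \in Z$. Hence
$$o(R_x) = [\langle x \rangle : \langle x \rangle \cap Z].$$

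By hypothesis there is $y \in C$ with $\langle x \rangle \cap C_G(y) \subseteq Z$. Conversely, $Z$ centralizes $y \in \langle C \rangle$, so $\langle x \rangle \cap Z \subseteq \langle x \rangle \cap C_G(y)$. The two subgroups are therefore equal, and the cycle of $R_x$ through $y$ has length
$$[\langle x \rangle : \langle x \rangle \cap C_G(y)] = [\langle x \rangle : \langle x \rangle \cap Z] = o(R_x).$$

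The order of a permutation is the lcm of its cycle lengths, so every cycle length of $R_x$ divides $o(R_x)$. We have just found a cycle of length $o(R_x)$. Thus the largest cycle length is a multiple of all the others, and $R_x$ is a regular permutation. As $x \in C$ was arbitrary, every right translation of ${\rm Conj}(G,C)$ is regular, which is the Hayashi property.

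The only point needing care is that $C$ is now a union of classes rather than a single class. Because of this, the $R_x$ need not all have the same cycle type, and the single-class reduction behind Lemma \ref{filip} does not apply. This is why the hypothesis is stated for every $x$, and why the argument is run separately for each $x$. The argument never uses that $y$ lies in the same class as $x$. It only uses $y \in C$, so that $Z$ centralizes $y$ and $y$ is a point on which $R_x$ acts.
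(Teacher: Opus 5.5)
Your proof is correct and is essentially the argument the paper intends. The paper only says the proof is similar to that of Lemma~\ref{filip}: the $R_x$-cycle through $z$ has length $[\langle x\rangle:\langle x\rangle\cap C_G(z)]$, and $o(R_x)=[\langle x\rangle:\langle x\rangle\cap Z(\langle C\rangle)]$, so a suitable $y$ gives a cycle of length $o(R_x)$; you spell this out and apply it separately to each $x\in C$, as the hypothesis on unions of classes requires.
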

\begin{proof}
The proof is similar to that of Lemma \ref{filip}.
\end{proof}

We note that if ${\rm Conj}(G,C)$ and ${\rm Conj}(H,C')$ satisfy the Hayashi property for all conjugation closed subsets, then it is not necessary that ${\rm Conj}(G\times H,D)$ also satisfies the Hayashi property for all conjugation closed subsets of $G\times H$. As we have the following example.
\begin{example}\label{example}
We can take $G=H=S_3$, $C=\{(1),(1,2),(2,3),(1,3)\}$, $C'=\{(1,2),(2,3),(1,3),(1,2,3)\allowbreak,(1,3,2) \}$ and $D=\{((1),(1,2)),((1),(2,3)),\allowbreak((1),(1,3)),((1,2),(1,2,3))\allowbreak,((1,2),(1,3,2)),((2,3),(1,2,3))\allowbreak,((2,3),(1,3,2)),((1,3),(1,2,3)),((1,3),(1,3,2)) \}$. Then $Z(\langle D \rangle)=\{(1)\}$. For $x=((1,2),(1,2,3))$, there is no $y\in D$ such that $\langle x \rangle \cap C_{G\times H}(y)=\{(1)\}$.
\end{example}
However, we have the following result.

\begin{lemma}
If ${\rm Conj}(G,C)$ and ${\rm Conj(H,C')}$ satisfy the Hayashi property for conjugation closed subsets then ${\rm Conj}(G\times H,C\times C')$ satisfies the Hayashi property.
\end{lemma}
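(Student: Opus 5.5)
Set $Q={\rm Conj}(G\times H,C\times C')$. We argue directly with cycle structures of right translations rather than through Lemma \ref{ue}. For $(x,x')\in C\times C'$, the right translation $R_{(x,x')}$ on $C\times C'$ acts coordinatewise:
\[
R_{(x,x')}(y,y')=(xyx^{-1},\,x'y'x'^{-1})=(R_x(y),R_{x'}(y')),
\]
so $R_{(x,x')}=R_x\times R_{x'}$ as a permutation of $C\times C'$. The plan is therefore to (1) compute the cycle structure of a product permutation $\sigma\times\tau$ on $A\times B$, and (2) show that if $\sigma$ and $\tau$ are regular then so is $\sigma\times\tau$.

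For step (1), take a point $(a,b)$ with $a$ in a $\sigma$-cycle of length $p$ and $b$ in a $\tau$-cycle of length $q$. Its orbit under $\sigma\times\tau$ has length $\mathrm{lcm}(p,q)$. Hence every cycle length of $\sigma\times\tau$ has the form $\mathrm{lcm}(p,q)$ with $p$ a cycle length of $\sigma$ and $q$ a cycle length of $\tau$, and every such value actually occurs. Moreover
\[
o(\sigma\times\tau)=\mathrm{lcm}\big(o(\sigma),o(\tau)\big).
\]

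For step (2), the hypothesis that ${\rm Conj}(G,C)$ and ${\rm Conj}(H,C')$ have the Hayashi property means $R_x$ and $R_{x'}$ are regular. Thus $R_x$ has a cycle of length $m=o(R_x)$ and every cycle length of $R_x$ divides $m$; likewise $R_{x'}$ has a cycle of length $n=o(R_{x'})$ and all its cycle lengths divide $n$. Pairing a point on an $m$-cycle with a point on an $n$-cycle gives a cycle of $R_{(x,x')}$ of length $\mathrm{lcm}(m,n)$. Any other cycle length $\mathrm{lcm}(p,q)$ satisfies $p\mid m$ and $q\mid n$, so it divides $\mathrm{lcm}(m,n)$. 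This gives the Hayashi property for $Q$.

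The main obstacle is bookkeeping rather than mathematics. Since $C$ and $C'$ need not be single conjugacy classes, the right translations need not all share one cycle structure. We therefore argue pointwise for every $(x,x')$, which the argument above already does, and note that $C\times C'$ is conjugation closed in $G\times H$, so $Q$ is well-defined. The argument also explains Example \ref{example}: the set $D$ there is not a product set, so $R_{(x,x')}$ does not split as $R_x\times R_{x'}$.
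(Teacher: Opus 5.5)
Your proof is correct and is essentially the paper's argument: $R_{(x,x')}$ acts coordinatewise as $R_x\times R_{x'}$, the cycle through $(a,b)$ has length $\mathrm{lcm}$ of the two cycle lengths, and every such length divides the one obtained by pairing points on longest cycles of $R_x$ and $R_{x'}$. The paper phrases this for class representatives $x_i\in C_i$, $y_j\in C_j'$ rather than pointwise; that is only a difference in bookkeeping.
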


\begin{proof}
Let $C=\bigcup\limits_{i} C_i$ be the union of conjugacy classes of $G$. By assumption, for all $x_i \in C_i$, we have that $R_{x_i}$ is regular permutation in ${\rm Sym}(C)$. Similarly, for $C'=\bigcup\limits_j C_j' \subseteq H$, $R_{y_j}$ is regular for all $y_j \in C_j'$. Let $e \in C,f \in C'$ be the elements which give the longest cycles in $R_{x_i}, R_{y_j}$ respectively. For conjugacy class $C_i \times C_j$ of $G\times H$, and for $R_{(x_i,y_j)}$, the length of cycle of $(a,b)$ is the least common multiple of lengths of cycles of $a$ and $b$ in $R_{x_i}$ and $R_{y_j}$ respectively and hence divides the length of longest cycle in $R_{(x_i,y_j)}$ corresponding to element $(e,f)$.
\end{proof}

\begin{remark}\label{rm}
For a finite group $G$, if ${\rm Conj}(G, C)$ satisfies the Hayashi property for all conjugacy classes $C$, then $G$ is called a \textit{good} group \cite{Filip}. Moreover, a finite direct product of \textit{good} groups is also a \textit{good} group\cite[Corollary 5.1]{Filip}.
\end{remark}

In the next sections, we prove that for all finite irreducible Coxeter groups $G$, with conjugation closed subset $C\subseteq G$, ${\rm Conj}(G,C)$ satisfies the Hayashi property. In \cite{Cox1934}, Coxeter determined all finite irreducible groups of Euclidean displacements, which are generated by reflections. These groups are called Coxeter groups, which are higher dimensional analogues of dihedral groups.

The irreducible finite Coxeter groups are classified in families. The finite Coxeter groups $A_n$ are isomorphic to symmetric groups $S_{n+1},$ which are the groups of symmetries of a regular $n$-simplex in $\mathbb R^{n+1}.$ The finite Coxeter groups $B_n$ and $C_n$ are the groups of symmetries of $n$-dimensional hypercube in $\mathbb R^n,$ and its dual. The groups $D_n$ are the orientation preserving subgroups of $B_n$ and $C_n$. The finite Coxeter groups $F_4, H_3,$ and $H_4$ are the groups of symmetries of a regular $(3,4,3)$ polytope in $\mathbb R^4,$ the group of symmetries of an icosahedron and its dual dodecahedron, and the groups of symmetries of the regular $(5,3,3)$ polytope and its dual, respectively. However, the other exceptional Coxeter groups $E_6, E_7,$ and $E_8$ are not symmetry groups of regular polytopes. They are reflection groups generated by reflections in the roots of the corresponding exceptional root systems.

Apart from their geometric significance, Coxeter groups arise naturally in the theory of root systems and as Weyl groups of semisimple Lie algebras and algebraic groups. They also play an important role in the classification of semisimple Lie algebras through Dynkin diagrams. Coxeter groups have applications in algebraic and geometric combinatorics, geometry, representation theory, and mathematical physics. For further background and more details, we refer to \cite{Hum90, Hum1972, BB2005}. 

A finite Coxeter group is the direct product of finitely many irreducible Coxeter groups \cite[Section 2.2]{Hum90}. Therefore, as a consequence of Remark \ref{rm}, we will get that all finite Coxeter groups are \textit{good} groups.

%% file: Sn.tex
\section{Finite Coxeter Groups of Classical Type}\label{section.B_n}
The finite Coxeter groups of type $A_n$ are isomorphic to symmetric groups. The enumeration of conjugacy classes of $S_n$ with partitions of $n$ is well known. It is known that ${\rm Conj}(S_n, C)$ for conjugacy class $C$ of $S_n$ satisfies the Hayashi property \cite[Prop. 5.3]{Filip}. We also prove the same fact in the following theorem for the sake of completeness. The proof given here is different from the proof of \cite[Prop. 5.3]{Filip} and will be useful for the cases of $B_n$ and $D_n$.

\begin{theorem}\label{tsn}
Let $S_n$ be the symmetric group and $C$ be a conjugacy class of $S_n$. Then ${\rm Conj}(S_n,C)$ satisfies the Hayashi property.
\end{theorem}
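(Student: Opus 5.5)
The plan is to apply Lemma \ref{filip}. For a conjugacy class $C$ of $S_n$ with cycle type $\lambda$, I will exhibit $x,y\in C$ with $\langle x\rangle\cap C_{S_n}(y)\subseteq Z(\langle C\rangle)$. The trivial class and the degenerate cases $n\le 2$ are immediate, since then $R_x$ is the identity or $C$ is tiny. For $n\ge 3$ and $C$ nontrivial, $\langle C\rangle$ is a nontrivial normal subgroup, so it is $A_n$ or $S_n$, or $V_4$ when $n=4$. Its centre is trivial except when $\langle C\rangle=V_4$, i.e.\ $C$ is the class of $(12)(34)$. That class is abelian, so $R_x$ is the identity on $C$ and the property holds trivially. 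In every remaining case the target is $\langle x\rangle\cap C_{S_n}(y)=\{1\}$, which means: no nontrivial power of $x$ commutes with $y$.

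Write $x=c_1c_2\cdots c_k$ as a product of disjoint cycles of lengths $\lambda_1\ge\cdots\ge\lambda_k$, with fixed points included as $1$-cycles. I will take $y=gxg^{-1}$ for a cleverly chosen $g$, so that the cycles of $y$ are ``shifted'' relative to those of $x$. The key observation is that if $x^m\neq 1$, then $x^m$ moves some point $a$ inside a cycle $c_i$ with $\lambda_i\nmid m$. If $x^m$ commutes with $y$, then $y$ permutes the support and the orbits of $x^m$ compatibly.

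I would choose $y$ to link consecutive cycles of $x$, for instance by letting $g$ be a transposition swapping one point of $c_i$ with one point of $c_{i+1}$, or, more robustly, by ordering the points $1,\dots,n$ so that $x$ has cycles on consecutive blocks and letting $y$ have the same cycle type on blocks shifted by one. Then $y$ maps some point of each nontrivial orbit of $x^m$ to a point where $x^m$ acts differently, because the cycle lengths or positions differ. That contradicts $yx^my^{-1}=x^m$.

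The main obstacle is making one choice of $y$ work uniformly for all cycle types. Two situations are delicate. The first is when all cycles have equal length, so that $x^m$ may be a product of equal-length cycles which a shifted $y$ could still normalise. The second is the small-support cases, such as a single transposition or a single $3$-cycle, where $C_{S_n}(y)$ is large. For those I would first try the $y$ obtained by conjugating $x$ with the $n$-cycle $(1\,2\,\cdots\,n)$ applied to the block ordering. I would then verify directly that $x^m$ and $y$ commute only if $x^m$ fixes every point: $y$ carries the boundary point of some block of $x^m$ across into a neighbouring block on which $x^m$ acts with a different image. If the uniform argument breaks down for a few small patterns, I would handle them by direct computation. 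An example is $\lambda=(2)$ with $n=3$, where $x=(12)$ and $y=(23)$ give $\langle x\rangle\cap C(y)=\{1\}$.
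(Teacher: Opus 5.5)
\textbf{There is a genuine gap.} Your reduction is the same as the paper's: by Lemma~\ref{filip} it suffices to find a conjugate $y$ of $x$ such that no power $x^m\neq 1$ commutes with $y$. But the proposal stops where the proof has to start. The sentence ``$y$ maps some point of each nontrivial orbit of $x^m$ to a point where $x^m$ acts differently, because the cycle lengths or positions differ'' is the whole content of the theorem, and it is asserted, not proved. The paper spends Cases i--iii on exactly this step: it splits by the number $t$ of distinct cycle lengths, gives an explicit $\sigma$, and exhibits, for every $1\le k<o(e)$, a point at which $e^kz\neq ze^k$.

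Worse, the concrete $y$ you propose for the delicate equal-length situation fails. For the $n$-cycle $x=(1\,2\,\cdots\,n)$, conjugating by $(1\,2\,\cdots\,n)$ gives $y=x$. Then $\langle x\rangle\cap C_{S_n}(y)=\langle x\rangle$, while $Z(\langle C\rangle)=1$ for $n\ge 4$. This is one bad class for every $n$, so it cannot be absorbed into ``a few small patterns handled by direct computation.'' You need a different conjugate there. For example, the paper takes $z=(a_1\,a_2)x(a_1\,a_2)=(a_1,a_3,\ldots,a_n,a_2)$; then $x^kz(a_2)=a_{k+1}\neq zx^k(a_2)$ for all $1\le k\le n-1$ when $n\ge 4$.

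Your block-shift idea can in fact be completed for every other class, using the support remark you already made. Suppose $yx^my^{-1}=x^m$. Then $y$ preserves $S=\mathrm{supp}(x^m)$, which is a union of blocks $B_i$ with $\lambda_i\ge 2$. Since $y$ sends the last point of $B_i$ to the first point of $B_{i+1}$ (indices cyclic), a nonempty such $S$ must contain every block. That is impossible whenever $x^m$ has a fixed point. If $x^m$ is fixed-point-free, let $p$ be the last point of $B_i$. Then $x^my(p)\in B_{i+1}$, while $yx^m(p)\in B_{i-1}\cup B_i$, so there can be at most two blocks. With two blocks, equality forces $m\equiv 1$ modulo both lengths, i.e.\ $x^m=x$. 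A direct check then shows $xy\neq yx$ unless $x=(1\,2)(3\,4)$ in $S_4$.

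An argument of this kind, or the paper's case analysis, is what is missing; as written the proposal is a plan, not a proof. A minor slip: for $n=3$ the class of $3$-cycles generates $A_3$, which also has nontrivial centre, not only $V_4$. Since it is abelian, this is harmless.
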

\begin{proof}
We note from Lemma \ref{filip} that it is enough to show the existence of $e,z \in C$ such that $\langle e \rangle \cap C_{S_n}(z) \leq Z(\langle C \rangle)$. We can assume $|C|>1$. We observe that for a finite group $G$ and a conjugacy class $C$ of $G$, if $\langle C \rangle$ is an abelian group, then any two elements $e,z$ of $C$ satisfy the property $\langle e \rangle \cap C_{G}(z) \leq Z(\langle C \rangle)$. For $(1,2)^{S_3}$, we can choose $e=(1,2), z=(1,3)$. For $(1,2)^{S_4}$, $e=(1,2),z=(1,3)$, for $(1,2,3)^{S_4}$, $e=(1,2,3),z=(1,2,4)$ and for $(1,2,3,4)^{S_4}$, we can choose $e=(1,2,3,4),z=(1,2,4,3)$. Now let $n\geq 5$ and so have $Z(\langle C \rangle)=\{1\}$.

Let $\lambda_1^{\alpha_1}\lambda_2^{\alpha_2}\hdots \lambda_t^{\alpha_t}$ be the cycle structure of $e \in C$ such that $a_i$ is first element of one of $\lambda_i$-cycle.\\
\textbf{Case i}. $t\geq 3$. Let $\sigma=(a_1,a_2,a_3,\hdots,a_t)$ and $z={\sigma}e{\sigma}^{-1}$ then $z \in C$ is such that $a_{i+1}$ is the first element of one of the $\lambda_i$-cycle.
Let $k \in \mathbb N$ and $k< o(e)$ then there exists $\lambda_i$ such that $\lambda_i \nmid k$. Let one $\lambda_i$-cycle of $e$ be $(a_i,u_1,u_2,\hdots,u_{\lambda_i-1})$ and one $\lambda_{i-1}$-cycle be $(a_{i-1},x_1,x_2,\hdots,x_{\lambda_{i-1}-1})$. Let $U=\{u_1,u_2,\hdots,u_{\lambda_i-1}\}$ and $X=\{x_1,x_2,\hdots,x_{\lambda_{i-1}-1}\}$. The indices are modulo the length of the corresponding cycles with $a_0=a_t,u_0=a_i$ and $x_0=a_{i-1}$.

$ze^k(a_i)=z(u_k)={\sigma}e{\sigma}^{-1}(u_k)={\sigma}e(u_k)=\sigma(u_{k+1})=\begin{cases}
a_{i+1}\quad \text{ if } u_{k+1}=a_i\\
u_{k+1} \quad \text{ if } u_{k+1} \neq a_i
\end{cases}$ $\in \{a_{i+1}\} \cup U$

If $\lambda_{i-1}\geq 2$ then,
$e^kz(a_i)=e^k{\sigma}e{\sigma}^{-1}(a_i)=e^k{\sigma}e(a_{i-1})=e^k{\sigma}(x_1)=e^k(x_1) \in \{a_{i-1}\} \cup X$. 

Clearly $U \cap X = \{\phi\}$ and $a_{i-1} \neq a_{i+1}$ for $t\geq 3$. Thus $e^kz(a_i)\neq ze^k(a_i)$. 

If $\lambda_{i-1}=1$, then $e^kz(a_i)=e^k\sigma e\sigma^{-1}(a_i)=e^k \sigma e (a_{i-1})=e^k\sigma(a_{i-1})=e^k(a_i)=u_k \neq z(u_k)$.

\textbf{Case ii}. $t=1$. Since $n>4$, so for $e=2^{\alpha}$, we assume first three $2$-cycles of $e$ to be $(a_1,x_1)(a_2,x_2)(a_3,x_3)$ and $\sigma=(a_1,a_2,a_3)$ then for $z={\sigma}e{\sigma}^{-1}$, we see that $ez\neq ze$. 

Similarly for $e=3^{\alpha}$, we assume first two $3$-cycles of $e$ to be $(a_1,u_1,u_2)(a_2,x_1,x_2)$ and $\sigma=(a_1,a_2)$ then for $z={\sigma}e{\sigma}^{-1}$, we see that $e^kz\neq ze^k$ for $k=1,2$.

Now let $e=\lambda_1^{\alpha_1}$ and $\lambda_1 \geq 4$. If $\alpha_1=1$, then $\lambda_1=n$. We can assume the $n$-cycle is $(a_1,a_2,a_3,\hdots,a_{n})$. Let $\sigma=(a_1,a_2)$, then $z={\sigma}e{\sigma}^{-1}$ is $(a_1,a_3,a_4,\hdots,a_{n},a_2)$. Let $k=1,2,\hdots,n-1$, then\\
$e^kz(a_2)=e^k(a_1)=a_{k+1}$ and  $ze^k(a_2)=z(a_{k+2})= \begin{cases}
          a_{k+3} \quad \text{ if } 1 \leq k \leq n-3\\
          a_2 \quad \text{ if } k=n-2\\
          a_3 \quad \text{ if } k =n-1\\
\end{cases} $\\
If $\alpha_1\geq2$, then let first two $\lambda_1$-cycles be $(a_1,u_1,u_2,\hdots,u_{\lambda_1-1})$ and $(a_2,x_1,x_2,\hdots,x_{\lambda_1-1})$. Let $\sigma=(a_1,a_2)$, then first two $\lambda_1$-cycles of $z=\sigma e\sigma^{-1}$ are $(a_2,u_1,u_2,\hdots,u_{\lambda_1-1})$ and $(a_1,x_1,x_2,\hdots,x_{\lambda_1-1})$. Let $k=1,2,\hdots,\lambda_1-2$, then $e^kz(a_1)=e^k(x_1)=x_{k+1}$ and $ze^k(a_1)=z(u_k)=u_{k+1}$.
For $k=\lambda_1-1$, we have that $e^{-1}z(u_1)=e^{-1}(u_2)=u_1$ and $ze^{-1}(u_1)=z(a_1)=x_1$.

\textbf{Case iii}. $t=2$. Let $e=\lambda_1^{\alpha_1}\lambda_2^{\alpha_2}$ with $\lambda_1=1$ and let one of the $\lambda_2$-cycle be $(a_2,x_1,x_2,\hdots,x_{\lambda_2-1})$. Then for $\sigma=(a_1,a_2)$, one of the $\lambda_2$-cycle of $z={\sigma}e{\sigma}^{-1}$ is $(a_1,x_1,x_2,\hdots,x_{\lambda_2-1})$. Then, for $k=1,2,\hdots,\lambda_2-1$, we have $ze^k(a_1)=z(a_1)=x_1$ and $e^kz(a_1)=e^k(x_1)\neq x_{1}$.

Now let $\lambda_1 \geq 2$, $\lambda_2 >\lambda_1$, and one of the $\lambda_1$-cycle be $(a_1,u_1,u_2,\hdots,u_{\lambda_1-1})$ and one of the $\lambda_2$-cycle be $(a_2,x_1,x_2,\hdots,x_{\lambda_2-1})$. Then for $\sigma=(a_1,a_2)$, $z={\sigma}e{\sigma}^{-1}$ contains $(a_2,u_1,u_2,\hdots,u_{\lambda_1-1})$ and $(a_1,x_1,x_2\allowbreak,\hdots,x_{\lambda_2-2})$. Let $U=\{u_1,u_2,\hdots,u_{\lambda_i-1}\}$ and $X=\{x_1,x_2,\hdots,x_{\lambda_{i-1}-1}\}$. Then for $k=m\lambda_1 +r$ where $0\leq m < \frac{o(e)}{\lambda_1}$ and $1\leq r\leq \lambda_1-2$, we have
$ze^k(a_1)=z(u_r)=u_{r+1}\in U$ and $e^kz(a_1)=e^k(x_1)=x_{k+1} \in X$.
For $k=m\lambda_1+\lambda_1-1$, $0\leq m < \frac{o(e)}{\lambda_1}-1$, we have $ze^k(a_1)=z(u_{\lambda_1-1})=a_2$ and $e^kz(a_1)=e^k(x_1) \neq a_2$. For $k=m\lambda_1+\lambda_1$, $0\leq m< \frac{o(e)}{\lambda_1}-1$, we have $ze^k(a_1)=z(a_1)=x_1$ and $e^kz(a_1)=e^k(x_1) \neq x_1$.  We note that for $\lambda_1=2$, we only choose the last two types of $k$ values.

For $k=o(e)-1$, if $\lambda_1 \neq 2$, we have $ze^{-1}(u_1)=z(a_1)=x_1$ and $e^{-1}z(u_1)=e^{-1}(u_2)=u_1$. If $\lambda_1 = 2$, we have $ze^{-1}(u_1)=z(a_1)=x_1$ and $e^{-1}z(u_1)=e^{-1}(a_2)=x_{\lambda_2-1}$.

\end{proof}

\begin{theorem}
Let $C=\bigcup\limits_{i}C_i$ be the union of conjugacy classes of $S_n$. Then ${\rm Conj}(S_n,C)$ satisfies the Hayashi property. 
\end{theorem}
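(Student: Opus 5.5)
The plan is to use Lemma~\ref{ue}: for every $x\in C$ we must exhibit some $y\in C$ with $\langle x\rangle\cap C_{S_n}(y)\subseteq Z(\langle C\rangle)$. The basic observation is that this condition is invariant under simultaneous conjugation. If a pair $(e,z)$ works and $g\in S_n$, then the pair $(geg^{-1},gzg^{-1})$ also works, because $\langle C\rangle$ is normal and so its centre is normalised by $g$. Since $C=\bigcup_i C_i$ is conjugation closed, it therefore suffices to find, for each class $C_i\subseteq C$, a single pair $e\in C_i$, $y\in C$ that works. The trivial class is immediate, since $\langle 1\rangle=\{1\}$, so I assume $e\neq 1$.

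\textbf{Case $n\geq 5$.} Here $\langle C\rangle$ is a normal subgroup of $S_n$, so it is $1$, $A_n$ or $S_n$, and in every case $Z(\langle C\rangle)=\{1\}$. So we need $y\in C$ with $e^k\notin C_{S_n}(y)$ for $1\le k<o(e)$. This is exactly what the case analysis in the proof of Theorem~\ref{tsn} produces. For an arbitrary non-identity $e$ it builds a conjugate $z=\sigma e\sigma^{-1}$ with $e^kz\neq ze^k$ for all $1\leq k<o(e)$, and that construction never used that $C$ was a single class. Since $z\in C_i\subseteq C$, Lemma~\ref{ue} applies with $y=z$. I would recheck that Cases i--iii cover every cycle type with $e\neq1$ and $n\ge5$, including the fixed-point situations. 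In particular $2^\alpha$ and $3^\alpha$ with extra fixed points are handled under $t=2$ with $\lambda_1=1$.

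\textbf{Case $n\leq 4$.} This is the main obstacle. Here $Z(\langle C\rangle)$ can be nontrivial, but it can also shrink when classes are combined. The worst instance is $C\supseteq (12)(34)^{S_4}$. On its own this class generates the abelian group $V_4$, but together with other classes it generates $A_4$ or $S_4$, whose centre is trivial, and within the class every $y$ commutes with $x$. I would treat $n\le 4$ by enumerating the possible unions.

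\begin{itemize}
\item For $n\le 2$, and whenever $\langle C\rangle$ is abelian (for example $C\subseteq V_4$ or $C\subseteq A_3$), any $y$ works.
\item Otherwise $\langle C\rangle\in\{S_3,A_4,S_4\}$ has trivial centre, and we need $y\in C$ not commuting with any nontrivial power of $x$.
\item For $x$ a transposition, $3$-cycle or $4$-cycle, the pairs listed at the start of the proof of Theorem~\ref{tsn} already work, since they lie in the same class.
\item For $x=(12)(34)$ with $\langle C\rangle$ non-abelian, $C$ must contain another non-identity class, and we choose $y$ from it:
\begin{itemize}
\item a transposition $y=(13)$;
\item a $3$-cycle $y=(123)$;
\item a $4$-cycle $y=(1234)$, which conjugates $(12)(34)$ to $(14)(23)$ and so does not commute with it.
\end{itemize}
\end{itemize}

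Checking these finitely many cases completes the proof.
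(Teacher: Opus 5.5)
Your route is essentially the paper's. You reduce to one representative per class; your simultaneous-conjugation remark is the paper's observation that all $R_g$, $g\in C_i$, have the same cycle type. For $n\ge 5$ you reuse the construction from the proof of Theorem~\ref{tsn}, which yields $\langle x\rangle\cap C_{S_n}(y)=\{1\}$ with $y$ in the same class. The paper disposes of $n\le 4$ by computer, while you do it by hand.

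The hand enumeration has a hole in your third bullet. For $n=3$ the $3$-cycle class is $\{(1,2,3),(1,3,2)\}$, which generates the abelian group $A_3$. So every element of that class commutes with $x=(1,2,3)$, and no witness exists inside the class. The proof of Theorem~\ref{tsn} lists no pair for this class; its pair $(1,2,3),(1,2,4)$ lives in $S_4$. When $C$ also contains the transpositions, $\langle C\rangle=S_3$ has trivial centre, so $R_{(1,2,3)}$ genuinely needs a witness from another class. This is exactly the phenomenon you flagged for $(1,2)(3,4)$ in $S_4$, occurring one degree lower. As written, the claim that the same-class pairs already work for $3$-cycles is false for $n=3$.

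The fix is one line. If $\langle C\rangle$ is nonabelian in $S_3$, then $C$ contains the transpositions, and $y=(1,2)$ works: $(1,2)(1,2,3)(1,2)=(1,3,2)\neq(1,2,3)$, so $\langle(1,2,3)\rangle\cap C_{S_3}((1,2))=\{1\}$. Your $n=4$ cases check out, including the three witnesses for $(1,2)(3,4)$ and the same-class pairs for transpositions, $3$-cycles and $4$-cycles. With the $n=3$ case added, the enumeration is complete and the argument goes through.
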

\begin{proof}
The result is checked using computations for $n\leq 4$. Let $x_i \in C_i$. We observe that $R_g \in {\rm Sym}(C)$ has the same cycle structures for all $g \in C_i$. Therefore, it is enough to show that $R_{x_i} \in {\rm Sym}(C)$ is a regular permutation. If $C_i$ is the identity class, then $R_{x_i}$ is the identity permutation. So we may assume $C_i$ is nontrivial. Let $n\geq 5$, then by using Theorem \ref{tsn}, we get that there exists $y_i \in C_i$ such that $\langle x_i \rangle \cap C_{S_n}(y_i)=\{1\}$. This implies the length of the longest cycle of $R_{x_i}\in {\rm Sym}(C_i)$, which is $o(x)$ is the same in $R_{x_i} \in {\rm Sym}(C)$. The length of other cycles corresponds to the sizes of the subgroups of $\langle x \rangle$. Hence, the result follows.
\end{proof}

Let $G=I_2(n)$, and $C$ be a conjugacy class of $G$. We recall that $I_2(n)$ is isomorphic to the dihedral group of order $2n.$ The conjugation quandle ${\rm Conj}(G,C)$ satisfies the Hayashi property \cite[Theorem 5.2]{Filip}. In the following, we prove for any conjugation closed subset of $G$. 

\begin{theorem}
Let $G=I_2(n)$ and $C=\bigcup\limits_{i} C_i$ be the union of conjugacy classes of $G$. Then ${\rm Conj}(G, C)$ satisfies the Hayashi property.
\end{theorem}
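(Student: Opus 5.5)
The plan is to bypass Lemma \ref{ue} and compute the right translations $R_x\in{\rm Sym}(C)$ directly. The dihedral group is small enough that every $R_x$ can be written down explicitly. Lemma \ref{ue} is less suitable here because $Z(\langle C\rangle)$ can be trivial, and the condition $\langle x\rangle\cap C_G(y)\subseteq Z(\langle C\rangle)$ can fail for some unions of classes, much as in Example \ref{example}.

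Write $G=\langle r,s \mid r^n=s^2=1,\ srs=r^{-1}\rangle$. Its conjugacy classes are:
\begin{itemize}
\item $\{1\}$;
\item the rotation classes $\{r^k,r^{-k}\}$;
\item the central class $\{r^{n/2}\}$ when $n$ is even;
\item the reflections: a single class $\{sr^j : 0\le j<n\}$ when $n$ is odd, and two classes $\{sr^{2j}\}$, $\{sr^{2j+1}\}$ when $n$ is even.
\end{itemize}
Thus $C=C_{\rm rot}\sqcup C_{\rm ref}$. Here $C_{\rm rot}$ is a set of rotations. The set $C_{\rm ref}=\{sr^j : j\in J\}$ has index set $J\subseteq \mathbb Z_n$ equal to a union of cosets of the subgroup $2\mathbb Z_n$, because each reflection class corresponds to such a coset. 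When $n$ is odd, $2\mathbb Z_n=\mathbb Z_n$.

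Since $R_x(y)=xyx^{-1}$, there are two cases for $x\in C$.

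\emph{Case 1: $x=sr^j$ is a reflection.} Then $x^2=1$, so $R_x^2={\rm id}$. Every cycle of $R_x$ therefore has length $1$ or $2$, and $R_x$ is regular.

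\emph{Case 2: $x=r^k$ is a rotation.} In this case $x$ may be $1$ or central, which gives the identity permutation. Rotations commute with $x$, so $R_x$ fixes every point of $C_{\rm rot}$. On reflections,
\[ r^k(sr^j)r^{-k}=sr^{j-2k}, \]
so $R_x$ acts on $J$ as the translation $j\mapsto j-2k$. This translation preserves each coset $a+2\mathbb Z_n$. On each coset it acts freely by an element of the cyclic group $2\mathbb Z_n$, so every orbit has the same length $L=o(2k)$ in $\mathbb Z_n$, namely $n/\gcd(n,2k)$. Hence the cycle type of $R_x$ is $1^{a}L^{b}$, and $R_x$ is again regular.

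Since every right translation of ${\rm Conj}(G,C)$ is a regular permutation, the Hayashi property follows. The only step requiring care is Case 2: checking that $J$ is a union of $2\mathbb Z_n$-cosets for each parity of $n$, and that translation by $-2k$ has uniform orbit length on each such coset. Both follow from elementary facts about cyclic groups, so I expect no genuine obstacle.
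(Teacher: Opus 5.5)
Your proof is correct, but for rotations it takes a different route from the paper's. The paper handles reflections exactly as you do: $R_x$ has order dividing $2$. For a rotation $r^i\in C$, however, it never computes $R_{r^i}$. Instead it invokes the centralizer criterion of Lemma~\ref{ue}:
\begin{itemize}
\item for $n$ odd, $\langle r^i\rangle\cap C_G(s)=\{1\}$;
\item for $n$ even, $\langle r^i\rangle\cap C_G(s)=\langle r^i\rangle\cap C_G(sr)$ equals $\{1\}$ or $\{1,r^{n/2}\}$, using whichever of $s$, $sr$ lies in $C$.
\end{itemize}
In the second case $r^{n/2}$ is central in $G$ and lies in $\langle r^i\rangle\subseteq\langle C\rangle$, so it lies in $Z(\langle C\rangle)$.

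You replace this with the explicit formula $r^k(sr^j)r^{-k}=sr^{j-2k}$ and the observation that $J$ is a union of cosets of $2\mathbb Z_n$. This gives the full cycle type $1^{a}L^{b}$ with $L=n/\gcd(n,2k)$. Your argument is self-contained, since it does not depend on Lemma~\ref{ue}, whose proof the paper only indicates, and it produces the actual profile. The paper's argument is shorter and matches the uniform centralizer-criterion strategy it uses for the other Coxeter groups.

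One correction to your motivating remark: the hypothesis of Lemma~\ref{ue} in fact never fails for $I_2(n)$.
\begin{itemize}
\item For rotations it holds by the computation above.
\item For a reflection $x$ it holds automatically because $x$ has prime order. Either some $y\in C$ fails to commute with $x$, giving $\langle x\rangle\cap C_G(y)=\{1\}$, or $x$ centralizes $C$ and hence $x\in Z(\langle C\rangle)$.
\end{itemize}
So the analogy with Example~\ref{example} does not apply here, although nothing in your proof depends on it.
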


\begin{proof}
Let $G$ have the presentation $\left\{ r,s|\;r^n,s^2,srs^{-1}=r^{-1}\right\}$. Let $C=\bigcup\limits C_i$ be the union of conjugacy classes of $G$. If $C$ only contains rotations, then ${\rm Conj}(G, C)$ is trivial. If $C$ only contains reflections, then all right translations have order $2$. Now suppose $C$ contains both rotations and reflections. Again, $R_g$ for all reflections is an order $2$ permutation. If $n$ is odd, then we have $\langle r^i \rangle \cap C_{G}(s)=\{1\}$. For $n$ even, if $\{r^{n/2}\} \subset \langle r^i \rangle$, then $\langle r^i \rangle \cap C_{G}(s)=\langle r^i \rangle \cap C_{G}(sr) = \{1,r^{n/2}\}$ otherwise the intersection is $\{1\}$. Hence, we get the result.
\end{proof}

    The Coxeter groups of type $B_n$ and $C_n$ are isomorphic, and they are isomorphic to $C_2 \wr S_n,$ while the Coxeter groups of type $D_n$ is an index two subgroup of $C_2 \wr S_n.$
    
    Now we define the wreath product of the cyclic group $C_2 =\{\pm 1\} $ of order $2$ and the symmetric group $S_n$ of degree $n.$ Let $(C_2)^n$ denote the direct product of $n$-copies of $C_2, $ the group $S_n$ acts on $(C_2)^n$ by permuting its elements as follows:
     $$\pi. (a_1, a_2, \dots, a_n) = (a_{\pi^{-1}(1)}, a_{\pi^{-1}(2)}, \dots a_{\pi^{-1}(n)})$$
     for all $\pi\in S_n,$ and $(a_1, a_2, \dots, a_n)\in (C_2)^n.$ The semidirect product of $(C_2)^n$ and $S_n$ with respect to this group action is known as the wreath product of $C_2$ and $S_n.$
    This implies, for any two elements $(f, \pi) = ((a_1, a_2, \dots, a_n) ; \pi)$ and $(h, \sigma)= ((b_1, b_2, \dots, b_n) ; \sigma)$ in $C_2\wr S_n,$ their product is given by
    $$(f, \pi) (h, \sigma) = ( (a_1b_{\pi^{-1}(1)}, a_2b_{\pi^{-1}(2)}, \dots a_nb_{\pi^{-1}(n)}) ; \pi\sigma).$$

    \begin{lemma}\label{lemma_order}
    Let $(f,\pi)\in C_2 \wr S_n,$ then either the order of $(f, \pi) $ is the same as the order of $\pi,$ or twice the order of $\pi.$
    \end{lemma}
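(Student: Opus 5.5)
The plan is to compute the powers of $(f,\pi)$ explicitly and show that $(f,\pi)^{2m}$ is the identity, where $m=o(\pi)$. The starting point is the multiplication rule in $C_2\wr S_n$. By induction on $k$ it gives
$$(f,\pi)^k=\bigl(f\cdot(\pi.f)\cdot(\pi^2.f)\cdots(\pi^{k-1}.f);\ \pi^k\bigr),$$
where the product in $(C_2)^n$ is taken coordinatewise and $\pi^j.f$ denotes the permuted tuple.

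The first consequence is a lower bound. The projection $(f,\pi)\mapsto\pi$ is a group homomorphism $C_2\wr S_n\to S_n$. Hence $o(\pi)$ divides $o(f,\pi)$, and in particular $o(f,\pi)\geq m$.

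Next I evaluate at $k=m$. Here $\pi^m=1$, so $(f,\pi)^m=(g;1)$ with $g=\prod_{j=0}^{m-1}\pi^j.f\in(C_2)^n$. Every element of $(C_2)^n$ has order dividing $2$, and $(g;1)^2=(g^2;1)=(1;1)$. Therefore $(f,\pi)^{2m}=1$, and so $o(f,\pi)$ divides $2m$.

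It remains to combine the two facts: $m\mid o(f,\pi)$ and $o(f,\pi)\mid 2m$. These force $o(f,\pi)=m$ or $o(f,\pi)=2m$, which is the claim.

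For the later applications it is worth recording exactly when each case occurs. Look at the $i$-th coordinate of $g$: it is $\prod_{j=0}^{m-1}a_{\pi^{-j}(i)}$. The map $j\mapsto\pi^{-j}(i)$ runs over the $\pi$-cycle containing $i$, covering it $m/\ell$ times, where $\ell$ is the length of that cycle. So the $i$-th coordinate is $(\text{product of the signs on that cycle})^{m/\ell}$.

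Consequently $o(f,\pi)=2m$ exactly when some cycle has sign product $-1$ and $m/\ell$ is odd. The only real care needed is getting the index convention of the action right in the power formula. Any consistent convention yields the divisibility argument above unchanged, so this is not a genuine obstacle.
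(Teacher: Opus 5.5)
Your proposal is correct and follows essentially the same route as the paper: $(f,\pi)^{o(\pi)}$ lies in the base group $(C_2)^n$, so it has order $1$ or $2$, which gives $o(f,\pi)\mid 2\,o(\pi)$. You also make explicit a step the paper leaves implicit, namely $o(\pi)\mid o(f,\pi)$ via the projection $C_2\wr S_n\to S_n$, and your coordinate formula for $(f,\pi)^{o(\pi)}$ correctly anticipates the criterion in the paper's next lemma.
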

    \begin{proof}
    Clearly $(f,\pi)^{o(\pi)}\in (C_2)^n,$ where $o(\pi)$ denotes the order of $\pi$ and $(C_2)^n:= \{ (h, 1)\;|\; h \in (C_2)^n\}.$ This implies that the order of  $(f,\pi)^{o(\pi)}$ is either $1$ or $2,$ and hence the result. 
    \end{proof}

    Now, we discuss the enumeration of conjugacy classes of $C_2 \wr S_n$ with signed partitions of $n$. We begin by recalling the definition of a signed partition of $n.$
    
    \begin{definition} 
    Let $\lambda =\lambda_1^{\alpha_1} \lambda_2^{\alpha_1} \dots \lambda_t^{\alpha_t}$ be a partition of $n.$ A signed partition of $n$ of type $\lambda$  is a partition in which all parts $\lambda_i$ appear with two signs,
    denoted by $\lambda_i$ and $\overline{\lambda}_i.$ We call the parts $\lambda_i$ positive parts and $\overline{\lambda}_i$ negative parts.
    \end{definition}
    
    For example, $11, 1\overline{1},$ and $ \overline{1}\overline{1}$ are all signed partitions of $2$
     of type $11.$
     For an element $(f,\pi)= ((a_1, a_2, \dots, a_n) ; \pi)\in C_2 \wr S_n,$ the cycle product of $(f, \pi)$ corresponding to a cycle $(j, \pi(j), \dots, \pi^{t-1}(j))$ in $\pi$ is given by $a_ja_{\pi(j)} \dots a_{\pi^{t-1}(j)}\in C_2.$ Let $\overline{\lambda}$ be the signed partition of $n$ of type $\lambda$ associated with conjugacy class of $(f,\pi).$ Clearly, as the cycle  $(j, \pi(j), \dots, \pi^{t-1}(j))$ lie in $\pi,$ $t$ is a part in partition $\lambda$ of $n.$ If the cycle product  $a_ja_{\pi(j)} \dots a_{\pi^{t-1}(j)}$ is equal to $-1$, then the part of $\overline{\lambda}$ corresponding to this cycle is $\overline{t}, $ otherwise it is $t.$ For more details, we refer the reader to \cite{coxeter_KS}. 
     
     We fix some notation now. We denote the $m$-tuple with all entries equal to $1$ by ${\bf 1}_m$, and $\bar{\bf 1}_m$ denotes the $m$-tuple with the first entry equal to $-1$ and all other entries equal to $1$. We use $-{\bf1}_m$ to denote the $m$-tuple $(-1,-1,-1,\hdots,-1)$.
     
    \begin{lemma}\label{Lemma_order}
    Let $(f,\pi)\in C_2 \wr S_n,$ and let $\overline{\lambda} := \lambda_1^{\alpha_1}\lambda_2^{\alpha_2}\dots \lambda_k^{\alpha_k} \overline{\mu}_1^{\beta_1}\overline{\mu}_2^{\beta_2}\dots \overline{\mu}_l^{\beta_l}$ be the signed partition of $n$ associated with the conjugacy class of $(f,\pi).$ The order of $(f, \pi)$ is twice the order of $\pi$ if and only if there is at least one negative part $\overline{\mu}_i$ of $\overline{\lambda}$ such that $2^a| \mu_i$, where $a\geq0$ is the highest power of $2$ such that $2^a$ divides some part of partition $\overline{\lambda}$. 
    \end{lemma}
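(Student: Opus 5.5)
The plan is to reduce to single signed cycles, compute their orders, and then compare $2$-adic valuations of two least common multiples.

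\textbf{Decomposition.} Write $\pi=c_1c_2\cdots c_r$ as a product of disjoint cycles, including fixed points as $1$-cycles. Let $S_j$ be the support of $c_j$. Define $(f_j,c_j)\in C_2\wr S_n$, where $f_j$ agrees with $f$ on $S_j$ and is $1$ elsewhere. These elements act on disjoint sets of coordinates, so they commute pairwise, and their product is $(f,\pi)$. Hence
\[
o((f,\pi))=\operatorname{lcm}_j\, o((f_j,c_j)).
\]

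\textbf{A single signed cycle.} Let $c=(j,\pi(j),\dots,\pi^{t-1}(j))$ have cycle product $\varepsilon\in C_2$. Iterating the multiplication rule, $(f_j,c)^t=(h,1)$, where every coordinate of $h$ indexed by $S_j$ equals $\varepsilon$ (each coordinate picks up the product of all $a$'s around the cycle exactly once), and the other coordinates are $1$. Since $c$ has order $t$, no smaller positive power of $(f_j,c)$ is trivial. By the argument of Lemma~\ref{lemma_order}:
\begin{itemize}
\item a positive part $\lambda$ contributes order $\lambda$;
\item a negative part $\overline{\mu}$ contributes order $2\mu$.
\end{itemize}
Therefore
\[
o((f,\pi))=\operatorname{lcm}\bigl(\lambda_1,\dots,\lambda_k,2\mu_1,\dots,2\mu_l\bigr),\qquad
o(\pi)=\operatorname{lcm}\bigl(\lambda_1,\dots,\lambda_k,\mu_1,\dots,\mu_l\bigr).
\]

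\textbf{Comparing valuations.} The two least common multiples have the same odd part, so by Lemma~\ref{lemma_order} we only need to compare $2$-adic valuations. Write $v_2$ for the $2$-adic valuation. Then $v_2(o(\pi))=a$, the maximum of $v_2$ over all parts. On the other hand, $v_2(o((f,\pi)))$ is the larger of $\max_i v_2(\lambda_i)$ and $\max_i v_2(\mu_i)+1$. This quantity exceeds $a$ exactly when some negative part satisfies $v_2(\mu_i)=a$, that is, when $2^a\mid\mu_i$; otherwise it equals $a$. This gives the stated equivalence.

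\textbf{Main obstacle.} The only step needing care is the computation that the $t$-th power of a signed $t$-cycle is the scalar $\varepsilon$ on its support. This is a routine induction using the multiplication formula $(f,\pi)(h,\sigma)=(f\cdot\pi.h,\pi\sigma)$.
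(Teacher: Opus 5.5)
Your proof is correct, and it takes a somewhat different route from the paper.

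\textbf{The paper's route.} The paper passes to a normalized class representative, with $\bar{\bf 1}_{\mu}$ on each negative cycle. It computes the single power $(f,\pi)^{o(\pi)}$ directly and observes that it equals $-{\bf 1}$ exactly on the negative cycles with $2^a\mid\mu_j$, and ${\bf 1}$ elsewhere. The reason, left implicit, is that $o(\pi)/\mu_j$ is odd precisely when $v_2(\mu_j)=a$. It then invokes Lemma~\ref{lemma_order} to conclude that the order is $o(\pi)$ or $2o(\pi)$ accordingly.

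\textbf{Your route.} You factor $(f,\pi)$ into commuting signed cycles and derive the closed formula $o((f,\pi))=\operatorname{lcm}(\lambda_1,\dots,\lambda_k,2\mu_1,\dots,2\mu_l)$. After that, the statement reduces to comparing $2$-adic valuations.

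\textbf{Comparison.} The paper's version is shorter, since it needs only one power computation. Yours has several advantages:
\begin{itemize}
\item it works for an arbitrary $(f,\pi)$ without normalizing;
\item it makes the role of $a$ transparent;
\item it yields the exact order as a by-product;
\item it makes your second appeal to Lemma~\ref{lemma_order} unnecessary, because equal odd parts together with valuations differing by at most one already force the ratio to be $1$ or $2$.
\end{itemize}

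\textbf{One step to make explicit.} The equality $o((f,\pi))=\operatorname{lcm}_j\, o((f_j,c_j))$ needs more than pairwise commutation; $g\cdot g^{-1}$ shows why. It holds because each $(f_j,c_j)^m$ is supported on $S_j$, in both the sign vector and the permutation part. Hence $\prod_j (f_j,c_j)^m$ is trivial only if every factor is trivial. Your phrase ``act on disjoint sets of coordinates'' points at this, but it deserves a sentence.
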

    
    \begin{proof}
    Let $o(x)$ denote the order of the element $x.$ If $l=0,$ then it is easy to see that $o((f, \pi))=o(\pi).$ We suppose $l>0.$ As the signed partition associated with $(f, \pi)$ is $\overline{\lambda},$ let 
    \[ (f, \pi) = ( {\bf 1}_{\lambda_1 \alpha_1}, \dots , {\bf 1}_{\lambda_k \alpha_k}, \bar{\bf 1}_{\mu_1 \beta_1}, \dots \bar{\bf 1}_{\mu_l \beta_l} ; \pi)\]

    Let $a (\geq 0)$ be the highest power of $2$ such that $2^a$ divides some part of the partition $\overline{\lambda}$. Suppose there are negative parts in $\overline{\lambda}$ that are divisible by $2^a$, then we rearrange the parts of the signed partition in such a way that $2^a |\mu_j$ for all $i\leq j\leq l.$ Now we compute
    \[ (f, \pi)^{o(\pi)} = ( {\bf 1}_{\lambda_1 \alpha_1}, \dots , {\bf 1}_{\lambda_k \alpha_k}, {\bf 1}_{\mu_1 \beta_1}, \dots {\bf 1}_{\mu_{i-1} \beta_{i-1}},-{\bf 1}_{\mu_i \beta_i}, \dots, -{\bf 1}_{\mu_l \beta_l} ;  (1) )\]
    
    So in this case, we have $o((f, \pi))=2o(\pi).$ If there is no negative part in $\overline{\lambda}$, that is divisible by $2^a$, then $o(f,\pi)=o(\pi)$.
    \end{proof}

Now, we show that conjugation quandles over all finite Coxeter groups of type $B_n$ and $D_n$ satisfy the Hayashi property. We begin this section by describing the normal subgroups of $C_2 \wr S_n$ generated by its conjugacy classes. 

Recall that we denote the cyclic group of order $2$ with $C_2,$ and $(C_2)^n$ denotes the direct product of $n$ copies of $C_2.$ The group $(C_2)^n_0=\{ (a_1, a_2, \dots, a_n) \;|\; \Pi_{i=1}^n a_i =1\}$ is an index $2$ subgroup of $(C_2)^n.$ The finite Coxeter group of type $D_n = \{ ((a_1, a_2, \dots, a_n); \pi) \;|\; \Pi_{i=1}^n a_i =1, \pi \in S_n\}$ is an index $2$ subgroup of $C_2 \wr S_n.$ We also denote this by $(C_2)_0^n \rtimes S_n.$ The another two index $2$ subgroups of $C_2 \wr S_n$ generated by its conjugacy classes are $C_2 \wr A_n$ and $(C_2)_1^n\rtimes S_n.$ The subgroup $C_2 \wr A_n = \{ ((a_1, a_2, \dots, a_n); \pi) \;|\; \pi \in A_n\}$ and the finite presentation of subgroup $(C_2)_1^n\rtimes S_n$ is given below:
\[ (C_2)_1^n\rtimes S_n = \{ ((a_1, a_2, \dots, a_n); \pi) \;|\; \Pi_{i=1}^n a_i =-1, \pi \in S_n \setminus A_n\} \cup\{ ((a_1, a_2, \dots, a_n); \pi) \;|\; \Pi_{i=1}^n a_i =1, \pi \in A_n\} \]
\begin{table}[H]
\caption{A list of normal subgroups generated by the conjugacy classes of $C_2\wr S_n.$ }
\label{table}
\begin{center}
\begin{tabular}{|c|c|c|}
\hline
   Representative of $C$ & $\langle C \rangle$ & $Z(\langle C \rangle)$ \\
   \hline
  $ ({\bf 1}_n; (1)) $ & $\{ ({\bf 1}_n; (1))\}$  & $\{ ({\bf 1}_n; (1))\}$\\
  \hline
  $ ({-\bf 1}_n; (1)) $ & $Z(C_2 \wr S_n) \cong C_2$ & $Z(C_2 \wr S_n) \cong C_2$\\
  \hline
   $ ({\bf 1}_{n-m},{-\bf  1}_{m}; (1)), ~~ m$ even &  $(C_2)^n_0$ & $(C_2)^n_0$\\
   \hline
   $ ({\bf 1}_{n-m},{-\bf  1}_{m}; (1)),~~ m$ odd &  $(C_2)^n$ & $(C_2)^n$\\
   \hline
    $ ({\bf 1}_{n-m},-{\bf  1}_{m}; \pi), ~~ m$ even, $\pi$ odd &  $(C_2)^n_0\rtimes S_n $ & $Z(C_2 \wr S_n) \cong C_2$ if $n$ even, trivial otherwise\\
   \hline
    $ ({\bf 1}_{n-m},-{\bf  1}_{m}; \pi), ~~ m$ odd, $\pi$ odd  &  $(C_2)^n_1 \rtimes S_n$ & $Z(C_2 \wr S_n) \cong C_2$ if $n$ even, trivial otherwise\\
   \hline
    $ ({\bf 1}_{n-m},-{\bf  1}_{m}; \pi), ~~ m$ even, $\pi$ even &  $(C_2)^n_0\rtimes A_n $ & $Z(C_2 \wr S_n) \cong C_2$ if $n$ even, trivial otherwise\\
   \hline
    $ ({\bf 1}_{n-m},-{\bf  1}_{m}; \pi), ~~ m$ odd, $\pi$ even &  $(C_2)^n\rtimes A_n $ &  $Z(C_2 \wr S_n) \cong C_2$\\
   \hline
\end{tabular}
\end{center}
\end{table}

We begin the proof for $B_n$ by dividing into the cases of $o(f,\pi)=o(\pi)$ or $o(f,\pi)=2o(\pi)$. We only need to consider conjugacy classes $C$ with nonabelian $\langle C \rangle$. 

\begin{lemma}\label{lc1}
Let $C$ be a conjugacy class of $(f,\pi) \in C_2 \wr S_n,n\geq5$ such that $o((f,\pi))=o(\pi)$. Then there exists $x,y \in C$ such that $\langle x \rangle \cap C_{C_2 \wr S_n}(y)=\{1\}$.
\end{lemma}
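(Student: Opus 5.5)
The plan is to lift the construction of Theorem \ref{tsn} from $S_n$ to $C_2\wr S_n$ through the projection homomorphism
\[ p\colon C_2\wr S_n\to S_n,\qquad (h,\tau)\mapsto\tau. \]
Let $x=(f,\pi)\in C$. Since $o(x)=o(\pi)$, a power $x^k$ can be trivial only when $\pi^k=1$.

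\textbf{Preliminary reductions.} First I would dispose of the case $\pi=(1)$. Then $o(x)=o(\pi)=1$, so $x$ is the identity, $C$ is the trivial class, and the statement holds trivially. So assume $\pi\neq (1)$. Since $n\geq 5$, the element $e=\pi$ falls into one of the cases of the proof of Theorem \ref{tsn}, where the cycle type is read in $S_n$ with fixed points counted as parts $\lambda=1$:
\begin{itemize}
\item $t\geq 3$;
\item $t=2$;
\item $t=1$ with cycle type $2^{\alpha}$ and $2\alpha=n\geq 5$, so $\alpha\geq 3$;
\item $t=1$ with cycle type $3^{\alpha}$ and $\alpha\geq 2$;
\item $t=1$ with cycle type $\lambda_1^{\alpha_1}$ and $\lambda_1\geq 4$.
\end{itemize}
In each case that proof produces an explicit $\sigma\in S_n$ such that $z=\sigma\pi\sigma^{-1}$ satisfies $\pi^k z\neq z\pi^k$ for every $1\leq k<o(\pi)$. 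The proof in fact exhibits a point on which the two permutations differ.

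\textbf{The construction.} I would set $y=(\mathbf 1_n;\sigma)\,x\,(\mathbf 1_n;\sigma)^{-1}$. This lies in $C$, because $C$ is a conjugacy class of $C_2\wr S_n$, and $p(y)=\sigma\pi\sigma^{-1}=z$.

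Now suppose $x^k\in C_{C_2\wr S_n}(y)$ for some $1\leq k<o(x)=o(\pi)$. Applying the homomorphism $p$ gives $\pi^k z=z\pi^k$ in $S_n$. This contradicts the choice of $\sigma$. Hence the only power of $x$ centralizing $y$ is $x^0=1$, and therefore $\langle x\rangle\cap C_{C_2\wr S_n}(y)=\{1\}$.

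\textbf{Where the care is needed.} The point I would expect to require attention is checking that the argument in Theorem \ref{tsn} really covers every $k$ with $1\leq k<o(\pi)$ in each case, not merely a generating set of exponents. It also has to be valid for every nontrivial cycle type in $S_n$ with $n\geq 5$, including types containing fixed points, where the parts $\lambda_i=1$ enter the $t\geq 2$ cases.

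The hypothesis $o((f,\pi))=o(\pi)$ is exactly what makes the projection argument sufficient. If $o(x)=2o(\pi)$, then $x^{o(\pi)}$ is a nontrivial element of $(C_2)^n$ that projects to the identity, and it cannot be detected in $S_n$. That situation would have to be treated separately.
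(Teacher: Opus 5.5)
Your proposal is correct and follows the paper's proof. Both take the $\sigma$ from Theorem \ref{tsn}, conjugate $x$ by a lift of $\sigma$ to get $y$, and use $o(x)=o(\pi)$ so that any $x^k$ ($1\le k<o(x)$) commuting with $y$ would force $\pi^k$ to commute with $z$. Your explicit projection $p$ and the choice $h'=\mathbf 1_n$, which the paper itself uses in the $D_n$ corollary, simply make precise the paper's ``hence $x^ky\neq yx^k$''.
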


\begin{proof}
Let $x=(f,\pi) \in C$. For $\pi \in S_n$, using Theorem \ref{tsn} we can construct $z=\sigma{\pi}\sigma^{-1} \in S_n$ such that $\langle \pi \rangle \cap C_{S_n}(z) = \{1\}$. Now we construct $y=(h,z)=(h',\sigma)(f,\pi)(h',\sigma)^{-1}$ where $h' \in C_2^{n}$. Then for $k=1,2,\hdots,o(\pi)-1$, we have $\pi^kz\neq z\pi^k$ and hence $x^ky\neq yx^k$.
\end{proof}

\begin{lemma}\label{lc2}
Let $\bar{\lambda}=\overline{\mu}_1^{\beta_1}\overline{\mu}_2^{\beta_2}\hdots \overline{\mu}_l^{\beta_l}$ be the signed partition of conjugacy class $C$ of $C_2 \wr S_n,n\geq5$ satisfying for some $a\in \mathbb{Z}_{\geq 0}$, $\gcd(\frac{\mu_i}{2^a},2)=1$ for all $1\leq i\leq l$. Then there exists $x,y \in C$ such that $\langle x \rangle \cap C_{C_2\wr S_n}(y)=Z(\langle C \rangle)$.
\end{lemma}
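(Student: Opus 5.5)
The plan is to mimic the proof of Lemma \ref{lc1}, the only new ingredient being the central element $(-{\bf 1}_n;(1))$. Let $x=(f,\pi)\in C$ have signed partition $\bar\lambda$. All parts of $\bar\lambda$ are negative and all $\mu_i$ have the same $2$-adic valuation $a$.

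The first step is to compute $x^{o(\pi)}$ exactly. Write $o(\pi)=\operatorname{lcm}(\mu_1,\dots,\mu_l)$. Then $o(\pi)=2^a m$ with $m$ odd, so $o(\pi)/\mu_i$ is odd for every $i$. On the support of a $\mu_i$-cycle of $\pi$, the power $x^{\mu_i}$ acts as $-1$ on each coordinate, because the cycle product is $-1$. Raising this to the odd power $o(\pi)/\mu_i$ again gives $-1$ on each coordinate. Hence
\[x^{o(\pi)}=(-{\bf 1}_n;(1)),\]
which is consistent with Lemma \ref{Lemma_order} and gives $o(x)=2o(\pi)$. Consequently
\[\langle x\rangle\cap (C_2)^n=\{({\bf 1}_n;(1)),(-{\bf 1}_n;(1))\}=Z(C_2\wr S_n).\]

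The second step constructs $y$. If $\pi=(1)$, then $\bar\lambda=\bar 1^n$ and $C=\{(-{\bf 1}_n;(1))\}$ is central, so the statement holds trivially with $x=y$. Otherwise $\pi\neq 1$ and $n\ge 5$, so Theorem \ref{tsn} and its proof provide $\sigma\in S_n$ such that $z=\sigma\pi\sigma^{-1}$ satisfies $\pi^kz\neq z\pi^k$ for all $k$ not divisible by $o(\pi)$. Put $y=(h',\sigma)\,x\,(h',\sigma)^{-1}\in C$ for any $h'$; its permutation part is $z$.

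For $k$ not a multiple of $o(\pi)$, the permutation parts of $x^ky$ and $yx^k$ are $\pi^kz\neq z\pi^k$, so $x^k\notin C(y)$. For $k$ a multiple of $o(\pi)$, we have $x^k\in Z(C_2\wr S_n)$ by the first step. Therefore
\[\langle x\rangle\cap C_{C_2\wr S_n}(y)=\{({\bf 1}_n;(1)),(-{\bf 1}_n;(1))\}.\]

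The last step, which I expect to be the main obstacle (mostly bookkeeping), is to identify this set with $Z(\langle C\rangle)$. Since $(-{\bf 1}_n;(1))=x^{o(\pi)}\in\langle C\rangle$ is central in the whole group, it lies in $Z(\langle C\rangle)$. Conversely, $\pi\neq 1$, so $\langle C\rangle$ is one of the nonabelian subgroups in Table \ref{table}, and for these $Z(\langle C\rangle)\subseteq Z(C_2\wr S_n)\cong C_2$. Thus the two sets coincide. If the table entry for the relevant case gives the trivial group (e.g.\ $n$ odd), this contradicts $(-{\bf 1}_n;(1))\in\langle C\rangle$. So I would double-check the parity: here $l$ parts all negative with total sign $(-1)^{\sum\beta_i}$, and I would verify directly that $-{\bf 1}_n\in\langle C\rangle$ forces the center to be $C_2$, rather than rely on the table blindly.
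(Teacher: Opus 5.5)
Your proposal is correct and follows the paper's proof. You compute $x^{o(\pi)}=(-{\bf 1}_n;(1))$, take $z=\sigma\pi\sigma^{-1}$ from Theorem \ref{tsn}, set $y=(h',\sigma)x(h',\sigma)^{-1}$, and note that $x^k$ can centralize $y$ only when $o(\pi)\mid k$. Beyond the paper, you supply details it leaves implicit: why $o(\pi)/\mu_i$ is odd, the central case $\pi=(1)$, and why $Z(\langle C\rangle)=\{({\bf 1}_n;(1)),(-{\bf 1}_n;(1))\}$.

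Your closing parity worry is unfounded. If $n$ is odd, every $\mu_i$ must be odd, so $\pi$ is even and $\sum_i\beta_i$ is odd. That places $C$ in the row $(C_2)^n\rtimes A_n$ of Table \ref{table}, whose center is $C_2$, consistent with $(-{\bf 1}_n;(1))=x^{o(\pi)}\in\langle C\rangle$.
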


\begin{proof} 
Let $x=(f,\pi) \in C$. We observe that $o(x)=2o(\pi)$, $(f,\pi)^{o(\pi)}=({-\bf 1}_n;(1) )$ and $Z(\langle C \rangle)=\{({\bf 1}_n;(1)),(-{\bf1}_n;(1))\}$. Using Theorem \ref{tsn}, we can construct $z=\sigma\pi\sigma^{-1}$ such that $\langle \pi \rangle \cap C_{S_n}(z)=\{1\}$. Now we construct $y=(h,z)=(h',\sigma)(f,\pi)(h',\sigma)^{-1}$ with $h'\in C_2^n$. Then for $k=1,2,\hdots,2o(\pi)-1$, $k\neq o(\pi)$, we have $\pi^kz\neq z\pi^k$ and hence $x^ky\neq yx^k$. For $k=o(\pi)$, we have $x^k =({-\bf 1}_n;(1) )$ and so $x^ky=yx^k$.
\end{proof}

\begin{lemma}\label{lc3}
Let $C$ be a nontrivial conjugacy class of $ C_2 \wr S_n$, $n\geq 5$, except for the above two cases. Then there exist $x,y \in C$ such that $\langle x \rangle \cap C_{C_2 \wr S_n}(y)=\{1\}$.
\end{lemma}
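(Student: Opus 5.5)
We set $x=(f,\pi)\in C$ and split according to Lemma~\ref{lemma_order}. If $o(x)=o(\pi)$, Lemma~\ref{lc1} already gives the claim. So the remaining classes are those with $o(x)=2o(\pi)$ that do not satisfy the hypothesis of Lemma~\ref{lc2}.

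By Lemma~\ref{Lemma_order} and its proof, we may then write $x^{o(\pi)}=(g;(1))$. Here $g$ has entry $-1$ exactly on the set $S\subseteq\{1,\dots,n\}$, the union of the supports of those negative cycles $\overline{\mu}_j$ with $2^a\mid\mu_j$, where $a$ is the largest $2$-adic valuation of a part. Since we are outside Lemma~\ref{lc2}, either some part is positive or some negative part has smaller $2$-adic valuation. Hence $S$ is a nonempty proper subset of $\{1,\dots,n\}$, and $x^{o(\pi)}$ is non-central.

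As in Lemmas~\ref{lc1} and~\ref{lc2}, we take $y=(h',\sigma)x(h',\sigma)^{-1}=(h,z)$ with $z=\sigma\pi\sigma^{-1}$, where $\sigma$ is as in the proof of Theorem~\ref{tsn}.

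\emph{Case $o(\pi)\nmid k$, $1\le k<2o(\pi)$.} The $S_n$-component of $x^k$ is $\pi^k\neq 1$. Projecting to $S_n$, we have $\pi^kz\neq z\pi^k$, so $x^ky\neq yx^k$.

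\emph{Case $k=o(\pi)$.} A direct computation in the wreath product gives
\[
(g;(1))(h;z)=(gh;z), \qquad (h;z)(g;(1))=(h\,(z\cdot g);z).
\]
So $x^{o(\pi)}$ commutes with $y$ iff $z\cdot g=g$, i.e.\ iff $z(S)=S$. It therefore suffices to choose the cycle representatives $a_i$ in the construction of Theorem~\ref{tsn} so that additionally $z(S)\neq S$. The construction leaves freedom in which cycle of a given length supplies each $a_i$, and we exploit that freedom.

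\emph{Equal cycle lengths ($t=1$).} All parts have equal length, hence equal valuation, so $S$ is exactly the union of the negative cycles. Both positive and negative cycles occur. When $\sigma=(a_1,a_2)$ swaps two cycles, we take $a_1$ in a negative cycle and $a_2$ in a positive one. Then $z$ moves a point of $S$ outside $S$. The cases $2^\alpha$ with $\sigma=(a_1,a_2,a_3)$ and $3^\alpha$ are handled the same way: place one chosen cycle inside $S$ and a cycle swapped or cycled with it outside $S$.

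\emph{Several lengths ($t\ge2$).} Here $\sigma$ permutes representatives $a_1,\dots,a_t$ of cycles of different lengths. If some chosen $a_i\in S$ while $a_{i+1}\notin S$ (or conversely), then $z$ carries part of a cycle of $S$ off $S$ and $z(S)\ne S$. If every representative choice put all $a_i$ in $S$, then every length $\lambda_i$ would be realized by some cycle in $S$. Since $S$ is proper, some length also has a cycle outside $S$, and we choose its representative there instead. The symmetric argument applies if all the $a_i$ lie outside $S$.

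I expect the main obstacle to be the bookkeeping in this last step. One must check, case by case through Cases i--iii of Theorem~\ref{tsn}, that changing which cycle supplies $a_i$ does not destroy the non-commutation already established there. Those arguments only use the cycle lengths, not which cycle of that length is used, so this should go through. One must also double-check that the moved point really lands off $S$ rather than cycling back into it, for instance when $z$ merges the first element of one cycle into another of the same valuation class.
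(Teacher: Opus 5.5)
Your proof is correct and is essentially the paper's argument. The paper also takes $y=(h',\sigma)x(h',\sigma)^{-1}$ with $\sigma$ from Theorem~\ref{tsn}, and excludes $x^{o(\pi)}$ from $C_{C_2\wr S_n}(y)$ by letting $\sigma$ send a representative of a cycle outside $S$ (of length at least $2$) to a representative $a_2$ of a cycle in $S$, then comparing $a_2$-coordinates. That is your criterion $z(S)\neq S$ tested at one point. Your formulation, with the observation that $h$ plays no role, streamlines the paper's Cases i--iv and also covers patterns such as $1^{\alpha}2^{\beta}\overline{2}^{\gamma}$ that those cases do not explicitly list.

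Two small remarks. First, your claim that a change $a_i\in S$, $a_{i+1}\notin S$ (or conversely) forces $z(S)\neq S$ through the $z$-cycle $(a_{i+1},u_1,\dots)$ needs $\lambda_i\geq 2$. This is harmless, since a non-constant cyclic pattern of memberships has at least two such changes and at most one $\lambda_i$ equals $1$. Second, your construction needs $\pi\neq 1$, and this cannot be avoided: for $\pi=1$ with some but not all signs negative, the class lies in the abelian group $(C_2)^n$ and the conclusion genuinely fails. These classes must be excluded, as the paper does implicitly by treating only classes with nonabelian $\langle C\rangle$.
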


\begin{proof}
Let $\bar{\lambda}=\lambda_1^{\alpha_1} \overline{\mu}_1^{\beta_1}\lambda_2^{\alpha_2} \overline{\mu}_2^{\beta_2}\hdots \lambda_k^{\alpha_k} \overline{\mu}_l^{\beta_l}$ be the signed partition of conjugacy class of element $x=(f,\pi)\in C_2 \wr S_n$, such that 
\[(f,\pi)= ({ {\bf 1}_{\lambda_1},\cdots,{\bf 1}_{\lambda_1}, {\bf \bar 1}_{\mu_1},\cdots,{\bf \bar 1}_{\mu_1},\cdots,{\bf 1}_{\lambda_k},\cdots,{\bf 1}_{\lambda_k},{\bf \bar 1}_{\mu_l},\cdots,{\bf \bar 1}_{\mu_l}}; \pi)\]

where ${\bf 1}_{\lambda_i}=(1,1,\hdots,1)$, is $\lambda_i$-tuple corresponding to $\lambda_i$-cycle and ${\bf \bar 1}_{\mu_i}=(-1,1,1,\hdots,1)$ is $\mu_i$-tuple corresponding to $\mu_i$-cycle of $\pi$. We note to exclude cases of Lemma \ref{lc1} and Lemma \ref{lc2} for $\bar{\lambda}$. \\
\textbf{Case i}. Let $k\geq 1$ and $\bar{\lambda}$ is arranged such that $ \lambda_1 \geq 2$ and ${\bf -1}_{\mu_1}$ of $(f,\pi)$ in $(f,\pi)^{o(\pi)}$ becomes $(-1,-1,\hdots,-1)$ and $\lambda_1 \neq \mu_1$. Let $(a_1,u_1,u_2,\hdots,u_{\lambda_1-1})$ be one of the $\lambda_1$- cycle, and $(a_2,x_1,x_2,\hdots,x_{\mu_1-1})$ be one of the $\mu_1$-cycle of $\pi$. Let $U=\{u_1,u_2,\hdots,u_{\lambda_1-1}\}$ and $X=\{x_1,x_2,\hdots,x_{\mu_1-1}\}$. Now by using Theorem \ref{tsn}, we can construct an appropriate $\sigma=(a_1,a_2,\hdots,a_m)$ such that for $z=\sigma\pi\sigma^{-1}$ we have $\langle \pi \rangle \cap C_{S_n}(z)=\{1\}$. Let $(h,z)=(h',\sigma)(f,\pi)(h',\sigma)^{-1}$ where $h' \in C_2^n$. It is enough to show $(f,\pi)^{o(\pi)} \notin C_{C_2\wr S_n}((h,z))$.
Let $(f,\pi)^{o(\pi)}=(g,1)$. Then we have $g_{a_2}=-1$. If $h_{a_2}=-1$, then in $(g,1)(h,z)$ we have $g_{a_2}h_{a_2}=1$, and in $(h,z)(g,1)$ we have $h_{a_2}g_{\sigma\pi^{-1}\sigma^{-1}(a_2)}=-1. g_{\sigma\pi^{-1}(a_1)}=-1.g_{\sigma(u_{\lambda_1-1})}=-1.g_{u_{\lambda_1-1}}=-1.1 =-1$. If $h_{a_2}=1$, then in $(g,1)(h,z)$ we have $g_{a_2}h_{a_2}=-1$, and in $(h,z)(g,1)$ we have $h_{a_2}g_{\sigma\pi^{-1}\sigma^{-1}(a_2)}=1. g_{\sigma\pi^{-1}(a_1)}=1.g_{\sigma(u_{\lambda_1-1})}=1.g_{u_{\lambda_1-1}}=1.1 =1$\\
If $k=1$ and $\lambda_1=1$ then arrange $\bar{\lambda}=\bar{\mu_1}^{\beta_1}1^{\alpha_1}\bar{\mu_2}^{\beta_2}\hdots\bar{\mu_l}^{\beta_l}$  such that  ${\bf \bar 1}_{\mu_1}$ of $(f,\pi)$ in $(f,\pi)^{o(\pi)}$ becomes $(-1,-1,\hdots,-1)$. Let $(a_2,x_1,x_2,\hdots,x_{\mu_1-1})$ be one of the $\mu_1$-cycle of $\pi$ and $(a_1)$ be $1$-cycle. Then $\sigma:=(a_2,a_1,...)$ and for $z=\sigma\pi\sigma^{-1}$, we have $\langle \pi \rangle \cap C_{S_n}(z)=\{1\}$. Let $(h,z)=(h',\sigma)(f,\pi)(h',\sigma)^{-1}$ where $h' \in C_2^n$.
Let $(f,\pi)^{o(\pi)}=(g,1)$. Then we have $g_{a_1}=1$. If $h_{a_1}=-1$ then in $(g,1)(h,z)$ we have $g_{a_1}h_{a_1}=-1$ and in $(h,z)(g,1)$ we have $h_{a_1}g_{\sigma\pi^{-1}\sigma^{-1}(a_1)}=-1.g_{\sigma\pi^{-1}(a_2)}=-1.g_{\sigma(x_{\mu_1-1})}=-1.g_{x_{\mu_1-1}}=-1.-1=1$. If $h_{a_1}=1$ then in $(g,1)(h,z)$ we have $g_{a_1}h_{a_1}=1$ and in $(h,z)(g,1)$ we have $h_{a_1}g_{\sigma\pi^{-1}\sigma^{-1}(a_1)}=1.g_{\sigma\pi^{-1}(a_2)}=1.g_{\sigma(x_{\mu_1-1})}=1.g_{x_{\mu_1-1}}=1.-1=-1$.

\textbf{Case ii.} $k=1$, $l\geq 2$ and let $\bar{\lambda}=\bar{\mu_2}^{\beta_2}\bar{\mu_1}^{\beta_1}\lambda_1^{\alpha_1}\bar{\mu_3}^{\beta_3}\hdots\bar{\mu_l}^{\beta_l}$ signed partition of conjugacy class of element $(f,\pi)\in C_2 \wr S_n$, be arranged such that $\mu_2\neq 1$ and ${\bf \bar1}_{\mu_2}$ of $(f,\pi)$ in $(f,\pi)^{o(\pi)}$ becomes $\bf{1_{\mu_2}}$ $=(1,1,\hdots,1)$ and ${\bf \bar1}_{\mu_1}$ of $(f,\pi)$ in $(f,\pi)^{o(\pi)}$ becomes $(-1,-1,\hdots,-1)$ and $\lambda_1=\mu_1$. We note that this arrangement implies $\mu_1 \neq 1$ and so $\lambda_1\neq 1$. Let $(a_1,u_1,u_2,\hdots,u_{\mu_2-1})$ be one of the $\mu_2$- cycle, and $(a_2,x_1,x_2,\hdots,x_{\mu_1-1})$ be one of the $\mu_1$-cycle of $\pi$. Now the proof works similarly to the previous case.

\textbf{Case iii}. $k=0$, $l\geq 2$ and let $\bar{\lambda}=\bar{\mu_2}^{\beta_2}\bar{\mu_1}^{\beta_1}\bar{\mu_3}^{\beta_3}\hdots\bar{\mu_l}^{\beta_l}$ signed partition of conjugacy class of element $(f,\pi)\in C_2 \wr S_n$, be arranged such that ${\bf \bar 1}_{\mu_2}$ of $(f,\pi)$ in $(f,\pi)^{o(\pi)}$ becomes $\bf{1_{\mu_2}}$ $=(1,1,\hdots,1)$ and ${\bf \bar 1}_{\mu_1}$ of $(f,\pi)$ in $(f,\pi)^{o(\pi)}$ becomes $(-1,-1,\hdots,-1)$. Let $(a_1,u_1,u_2,\hdots,u_{\mu_2-1})$ be one of the $\mu_2$-cycle, and $(a_2,x_1,x_2,\hdots,x_{\mu_1-1})$ be one of the $\mu_1$-cycle of $\pi$. Now the proof works similarly to case (i).

\textbf{Case iv.} Let $\bar{\lambda}=2^{\alpha_1}\bar{2}^{\beta_1}$ be the signed partition of conjugacy class of element $e=(f,\pi)\in C_2 \wr S_n$. If $\alpha_1\geq2$, then let $(a_1,x_1)(a_2,x_2)$ be first two cycles of $2^{\alpha_1}$ and let $(a_3,x_3)$ be one of the $2$- cycle of $\bar{2}^{\beta_1}$. Then for $\sigma=(a_1,a_2,a_3)$ and $z=\sigma\pi\sigma^{-1}$, we have $\langle \pi \rangle \cap C_{S_n}(z)=\{1\}$. We have $o(e)=4$ and $o(\pi)=2$. Let $(h,z)=(h',\sigma)(f,\pi)(h',\sigma)^{-1}$ for $h' \in C_2^n$. Let $(f,\pi)^2=(g,1)$. Then we have $g_{a_3}=-1$. If $h_{a_3}=1$, then in $(g,1)(h,z)$ we have $g_{a_3}h_{a_3}=-1$, and in $(h,z)(g,1)$ we have $h_{a_3}g_{\sigma\pi^{-1}\sigma^{-1}(a_3)}=1.g_{\sigma\pi^{-1}(a_2)}=1.g_{\sigma(x_2)}=1.g_{x_2}=1.1=1$. 
If $h_{a_3}=-1$, then in $(g,1)(h,z)$ we have $g_{a_3}h_{a_3}=-1.-1=1$, and in $(h,z)(g,1)$ we have $h_{a_3}g_{\sigma\pi^{-1}\sigma^{-1}(a_3)}=-1.g_{\sigma\pi^{-1}(a_2)}=-1.g_{\sigma(x_2)}=-1.g_{x_2}=-1.1=-1$. The case of $\alpha_1=1, \beta_1\geq2$ works similarly.

Now let $\bar{\lambda}=\lambda_1^{\alpha_1}\bar{\lambda_1}^{\beta_1}$ with $\lambda_1\geq 3$ be the signed partition of conjugacy class of element $e=(f,\pi)\in C_2 \wr S_n$.
Let $(a_1,u_1,u_2,\hdots,u_{\lambda_1-1})$ be one of the $\lambda_1$-cycle corresponding to the part$\lambda_1^{\alpha_1}$ and $(a_2,x_1,x_2,\hdots,x_{\lambda_1-1})$ be one of the $\lambda_1$-cycle corresponding to $\bar{\lambda_1}^{\beta_1}$ part. Then for $\sigma=(a_1,a_2)$ and $z=\sigma\pi\sigma^{-1}$, we have $\langle \pi \rangle \cap C_{S_n}(z)=\{1\}$. We have $o(e)=2\lambda_1$ and $o(\pi)=\lambda_1$. Let $(h,z)=(h',\sigma)(f,\pi)(h',\sigma)^{-1}$ for $h' \in C_2^n$. Let $(f,\pi)^{\lambda_1}=(g,1)$. Then we have $g_{a_2}=-1$. If $h_{a_2}=1$, then in $(g,1)(h,z)$ we have $g_{a_2}h_{a_2}=-1$, and in $(h,z)(g,1)$ we have $h_{a_2}g_{\sigma\pi^{-1}\sigma^{-1}(a_2)}=1.g_{\sigma\pi^{-1}(a_1)}=1.g_{\sigma(u_{\lambda_1-1})}=1.g_{u_{\lambda_1-1}}=1.1=1$. 
If $h_{a_2}=-1$, then in $(g,1)(h,z)$ we have $g_{a_2}h_{a_2}=-1.-1=1$, and in $(h,z)(g,1)$ we have $h_{a_2}g_{\sigma\pi^{-1}\sigma^{-1}(a_2)}=-1.g_{\sigma\pi^{-1}(a_1)}=-1.g_{\sigma(u_{\lambda_1-1})}=-1.g_{u_{\lambda_1-1}}=-1.1=-1$.
\end{proof}

\begin{theorem}
Let $C$ be a conjugacy class of the finite Coxeter group $B_n$. Then there exists $x,y \in C$ such that $\langle x \rangle \cap C_{B_n}(y) \subseteq Z(\langle C \rangle)$.
\end{theorem}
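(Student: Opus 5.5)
The plan is to identify $B_n$ with $C_2\wr S_n$ and split according to $n$ and to the signed partition $\bar\lambda$ of $C$, using Lemmas \ref{lc1}, \ref{lc2} and \ref{lc3} for $n\geq 5$ and handling small $n$ separately.

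First, the trivial and abelian situations. If $C$ is the identity class, or more generally if $\langle C\rangle$ is abelian, then any $x,y\in C$ work, as observed in the proof of Theorem \ref{tsn}. From Table \ref{table}, this covers every class with $\pi=(1)$. Those classes generate $\{1\}$, $Z(C_2\wr S_n)$, $(C_2)^n_0$ or $(C_2)^n$, and each of these groups equals its own center.

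Next, the case $n\geq 5$ with $\pi\neq(1)$. There are three possibilities.

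\textbf{(a)} If $o((f,\pi))=o(\pi)$, Lemma \ref{lc1} gives $x,y\in C$ with $\langle x\rangle\cap C_{B_n}(y)=\{1\}\subseteq Z(\langle C\rangle)$.

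\textbf{(b)} Suppose $o((f,\pi))=2o(\pi)$ and all parts of $\bar\lambda$ are negative with the same $2$-adic valuation, i.e.\ $\mu_i/2^a$ is odd for all $i$. Then Lemma \ref{lc2} applies. It gives $\langle x\rangle\cap C_{B_n}(y)=\{(\mathbf 1_n;(1)),(-\mathbf 1_n;(1))\}$. This group is contained in $Z(\langle C\rangle)$: the element $(-\mathbf 1_n;(1))$ is central in $B_n$, and by the computation in that proof it equals $x^{o(\pi)}\in\langle C\rangle$. In this case $\pi$ is nontrivial, so $\langle C\rangle$ is nonabelian.

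\textbf{(c)} All remaining nontrivial classes fall under Lemma \ref{lc3}, which gives intersection $\{1\}$.

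The point to check is that (a), (b) and (c) really exhaust all classes. By Lemma \ref{Lemma_order}, if $o((f,\pi))=2o(\pi)$ but we are not in (b), then $\bar\lambda$ either has a positive part, or has negative parts of different $2$-adic valuations. These are exactly the configurations enumerated in Cases i–iv of Lemma \ref{lc3}.

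Finally, the cases $n\leq 4$ must be handled separately, since Theorem \ref{tsn} and the lemmas above assume $n\geq5$ (for instance, $Z(\langle C\rangle)$ need not be trivial for $S_4$-type pieces). For $n=1$ the group is $C_2$, which is abelian. For $n=2$, $B_2\cong I_2(4)$ is dihedral of order $8$, and the dihedral theorem above applies. For $n=3,4$ there are only finitely many classes: $10$ classes for $B_3$ and $20$ for $B_4$. I would go through them directly, choosing $x$ and a suitable conjugate $y=(h',\sigma)x(h',\sigma)^{-1}$ by hand or by a short computer check, and verifying $\langle x\rangle\cap C_{B_n}(y)\subseteq Z(\langle C\rangle)$ against Table \ref{table}. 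This small-rank verification, together with confirming that the case split of Lemma \ref{lc3} covers every signed partition not covered by Lemmas \ref{lc1} and \ref{lc2}, is where I expect the main effort. The delicate classes are those like $\bar\lambda=\lambda_1^{\alpha_1}\bar\lambda_1^{\beta_1}$ with small $\lambda_1$, and those where the element $x^{o(\pi)}=(g,1)$ is not central.
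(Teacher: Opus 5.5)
Your proposal follows the paper's proof: a direct (computer) check for $n\le 4$ and Lemmas \ref{lc1}, \ref{lc2}, \ref{lc3} for $n\ge 5$. Your extra care only sharpens the paper's two-line argument. You set aside the abelian classes with $\pi=(1)$, which Lemma \ref{lc3} read literally would misclassify, since there $\langle x\rangle\cap C_{B_n}(y)=\langle x\rangle$ has order $2$. You also justify $\{(\pm\mathbf 1_n;(1))\}\subseteq Z(\langle C\rangle)$ via $(-\mathbf 1_n;(1))=x^{o(\pi)}$ being central in $B_n$. One small slip: the dihedral theorem gives the Hayashi property, not the pair condition. This does not matter, because every class of $B_2\cong I_2(4)$ generates an abelian subgroup, so the statement is trivial there.
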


\begin{proof}
The result is checked by computations for $B_n$ with $n\leq 4$. For $B_n$ with $n\geq 5$, the result follows using Lemma \ref{lc1}, Lemma \ref{lc2} and Lemma \ref{lc3}. 
\end{proof}

\begin{corollary}
 Let $C$ be a conjugacy class of the finite Coxeter group $D_n$. Then there exists $x,y \in C$ such that $\langle x \rangle \cap C_{D_n}(y) \subseteq Z(\langle C \rangle)$.
\end{corollary}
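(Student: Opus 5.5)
The plan is to deduce the corollary from the $B_n$ case by viewing $D_n=(C_2)^n_0\rtimes S_n$ as an index $2$ subgroup of $C_2\wr S_n$. For $n\leq 4$ I would check the statement directly by computer, as was done for $B_n$. So assume $n\geq 5$, and let $C$ be a conjugacy class of $D_n$ with $x=(f,\pi)\in C$. Then $C$ lies inside the $B_n$-class $x^{B_n}$, which either equals $C$ or splits into two $D_n$-classes. In either case the signed partition of $x$ falls under exactly one of Lemma \ref{lc1}, Lemma \ref{lc2} or Lemma \ref{lc3}.

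The key observation concerns the witness in those lemmas. There it is always produced as $y=(h',\sigma)(f,\pi)(h',\sigma)^{-1}$, and the vector $h'\in C_2^n$ is completely free. The verifications only use the permutation part $z=\sigma\pi\sigma^{-1}$, through Theorem \ref{tsn}, together with a sign computation at one coordinate that is carried out for both values of $h_{a_i}$. I would therefore choose $h'$ with $\prod_i h'_i=1$, flipping one coordinate if necessary, so that $(h',\sigma)\in D_n$. Then $y$ is $D_n$-conjugate to $x$, so $y\in C$. This is also where a split class is handled: it is exactly why we cannot simply take any $B_n$-conjugate, and why I expect this to be the main point needing care. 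One must go through each case of Lemma \ref{lc3} and confirm that the contradiction obtained really holds for either choice of $h_{a_i}$, which the written proofs indicate.

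With this choice, $C_{D_n}(y)\subseteq C_{B_n}(y)$ gives
$$\langle x\rangle\cap C_{D_n}(y)\subseteq \langle x\rangle \cap C_{B_n}(y).$$
Under Lemma \ref{lc1} or Lemma \ref{lc3} the right-hand side is trivial, so we are done. Under Lemma \ref{lc2} the right-hand side is $\{1,x^{o(\pi)}\}=\{({\bf 1}_n;(1)),(-{\bf 1}_n;(1))\}$. Here $x^{o(\pi)}=(-{\bf 1}_n;(1))$ is a power of $x$, so it lies in $\langle C\rangle$, and it is central in $C_2\wr S_n$. Hence it lies in $Z(\langle C\rangle)$, where now $\langle C\rangle$ is generated by the $D_n$-class. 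This gives $\langle x\rangle\cap C_{D_n}(y)\subseteq Z(\langle C\rangle)$, as required.
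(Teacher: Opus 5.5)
Your proposal is correct and is essentially the paper's proof. The paper likewise reuses the constructions of Lemmas~\ref{lc1}, \ref{lc2} and \ref{lc3}, taking $h'={\bf 1}_n$ so that $(h',\sigma)\in D_n$ and $y$ lies in the $D_n$-class. Your explicit check that $x^{o(\pi)}=(-{\bf 1}_n;(1))$ lies in $Z(\langle C\rangle)$ for the $D_n$-class $C$ is a detail the paper leaves implicit; it only remarks that such classes arise only for $n$ even. One small addition is needed: the diagonal classes $(f;(1))$ with $f\neq\pm{\bf 1}_n$ do not satisfy the conclusion of Lemma~\ref{lc3}, since there $\langle x\rangle\cap C_{D_n}(y)=\langle x\rangle$. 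For these classes, however, $\langle C\rangle\le (C_2)^n_0$ is abelian, so the claim is trivial and they should be set aside first, as the paper does implicitly.
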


\begin{proof}
The result is checked by computations for $D_n$ with $n\leq 4$. For $n\geq 5$, let $x=(f,\pi) \in D_n$. In the proof of Lemma \ref{lc1}, Lemma \ref{lc2} and Lemma \ref{lc3} we can take $h'={\bf1}_n \in C_2^n$. Then $(h',\sigma) \in D_n$ and so $y=(h',\sigma)(f,\pi)(h',\sigma)^{-1}$. We note that for $n$ odd, $D_n$ does not have conjugacy classes $C$ satisfying Lemma $\ref{lc2}$. Hence, the result follows. 
\end{proof}

Now we will give the proof for $C=\bigcup\limits_{i}C_i$. The following general lemma will be useful for us for the cases of $B_n$ and $D_n$.

\begin{lemma}\label{union}
Let $C=\bigcup\limits_{i}C_i$ with $x_i \in C_i$ be the union of conjugacy classes of group $G$. If ${\rm Conj}(G,C_i)$ satisfies the Hayashi property and $Z(\langle  C_i\rangle) \leq Z(\langle C \rangle)$ then $R_{x_i} \in {\rm Sym}(C)$ is a regular permutation. 
\end{lemma}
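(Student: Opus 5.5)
We will analyse the cycle structure of $R_{x_i}$ acting on $C$ orbit by orbit. Since each $C_j$ is conjugation closed, $R_{x_i}$ preserves every $C_j$. So $R_{x_i}\in{\rm Sym}(C)$ is the disjoint union of its restrictions $R_{x_i}|_{C_j}$. The length of the cycle of $R_{x_i}$ through $c\in C$ is the smallest $k\geq 1$ with $x_i^k c x_i^{-k}=c$. This is the index $[\langle x_i\rangle : \langle x_i\rangle\cap C_G(c)]$, and since $\langle x_i\rangle$ is cyclic it divides $o(x_i)$. Regularity of $R_{x_i}$ therefore amounts to one claim: some cycle of $R_{x_i}$ on $C$ has length equal to the largest of these indices. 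We will show that the maximum is attained inside $C_i$ itself and dominates every other cycle length.

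The key step concerns $\langle x_i\rangle\cap Z(\langle C_i\rangle)$, which is a subgroup of $\langle x_i\rangle$. Every $z$ in it centralizes $\langle C_i\rangle$. By hypothesis $z\in Z(\langle C_i\rangle)\leq Z(\langle C\rangle)$, so $z$ also centralizes $\langle C\rangle\supseteq C$. Hence $z$ fixes every $c\in C$ under conjugation, and
\[
\langle x_i\rangle\cap Z(\langle C_i\rangle)\subseteq \langle x_i\rangle\cap C_G(c)\quad\text{for all } c\in C .
\]
Let $d=|\langle x_i\rangle\cap Z(\langle C_i\rangle)|$. Subgroups of a cyclic group are ordered by containment exactly as their orders are ordered by divisibility. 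So $d$ divides $|\langle x_i\rangle\cap C_G(c)|$, and every cycle length of $R_{x_i}$ on $C$ divides $o(x_i)/d$.

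It then remains to see that a cycle of length exactly $o(x_i)/d$ occurs. For this we use the Hayashi property of ${\rm Conj}(G,C_i)$, and here we must pin down what that property supplies. In the paper's usage, regularity on $C_i$ is obtained from Lemma \ref{filip}: one has $y\in C_i$ with $\langle x_i\rangle\cap C_G(y)\subseteq Z(\langle C_i\rangle)$. Combined with the reverse inclusion $\langle x_i\rangle\cap Z(\langle C_i\rangle)\subseteq \langle x_i\rangle\cap C_G(y)$, which holds because $y\in C_i$, this gives equality. The cycle of $y$ then has length exactly $o(x_i)/d$.

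If one instead takes the Hayashi property abstractly, a regular permutation on $C_i$ with longest cycle length $L$, we argue as follows. The kernel of the $\langle x_i\rangle$-action on $C_i$ is $\langle x_i\rangle\cap C_G(C_i)=\langle x_i\rangle\cap Z(\langle C_i\rangle)$, since $C_G(C_i)\cap\langle C_i\rangle$ is the centre. The cycle lengths of a regular permutation all divide its order. Hence $L$ equals the order of $R_{x_i}|_{C_i}$, which is $o(x_i)/d$.

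In either reading, the cycle of $R_{x_i}$ through $y$ has length $o(x_i)/d$, and all other cycle lengths on $C$ divide it, so $R_{x_i}\in{\rm Sym}(C)$ is regular. The main obstacle is this last identification: that the longest cycle on $C_i$ has length exactly $o(x_i)/|\langle x_i\rangle\cap Z(\langle C_i\rangle)|$. The subtle point is the kernel computation. Note that $\langle x_i\rangle\subseteq\langle C_i\rangle$, so an element of $\langle x_i\rangle$ that centralizes all of $C_i$ lies in $Z(\langle C_i\rangle)$. I would write that step out carefully.
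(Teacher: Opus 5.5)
Your argument is correct and is essentially the paper's: the paper takes $e,z\in C_i$ with $\langle e\rangle\cap C_G(z)\le Z(\langle C_i\rangle)\le Z(\langle C\rangle)$ and applies the criterion of Lemma \ref{ue}, which you reprove directly via orbit--stabilizer in the cyclic group $\langle x_i\rangle$. Your kernel computation $\langle x_i\rangle\cap C_G(C_i)=\langle x_i\rangle\cap Z(\langle C_i\rangle)$ adds something the paper leaves implicit: it shows that the Hayashi property of ${\rm Conj}(G,C_i)$ itself, and not only the sufficient condition of Lemma \ref{filip}, yields such a pair. One wording fix: the reason $L$ equals the order is that regularity makes every cycle length divide $L$, so the order, being their lcm, equals $L$.
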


\begin{proof}
Given the assumption, there are $e,z \in C_i$ such that $\langle e \rangle \cap C_{G}(z) \leq Z(\langle C_i\rangle) \leq Z(\langle C \rangle)$. Therefore $R_e \in {\rm Conj}(G,C)$ is a regular permutation using Lemma \ref{ue}
\end{proof}

\begin{theorem}\label{unionbn}
Let $C=\bigcup\limits_{i}C_i$ be the union of conjugacy classes of $G=B_n$(or $D_n$). Then ${\rm Conj}(G, C)$ satisfies the Hayashi property. 
\end{theorem}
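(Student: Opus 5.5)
The plan is to reduce to Lemma \ref{union} (equivalently Lemma \ref{ue}), treating one class $C_i\subseteq C$ at a time. As in the $S_n$ case, the cycle structure of $R_g\in{\rm Sym}(C)$ is the same for all $g\in C_i$. So it suffices to show that $R_{x_i}\in{\rm Sym}(C)$ is regular for one representative $x_i\in C_i$.

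For $n\le 4$ we will verify the statement by direct computation, as the paper does for single classes. Assume $n\ge 5$ and fix a nontrivial class $C_i$; the identity class and central classes give $R_{x_i}$ equal to the identity. By the preceding theorem for $B_n$ and its corollary for $D_n$, which rely on Lemmas \ref{lc1}, \ref{lc2}, \ref{lc3}, there are $e,z\in C_i$ with $\langle e\rangle\cap C_G(z)\subseteq Z(\langle C_i\rangle)$. In fact those lemmas give the intersection exactly: it is trivial in the situations of Lemmas \ref{lc1} and \ref{lc3}, and it is $\{({\bf 1}_n;(1)),(-{\bf 1}_n;(1))\}$ in the situation of Lemma \ref{lc2}.

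When the intersection is trivial, Lemma \ref{ue} applies directly. The point $z\in C$ has $R_e$-orbit of length $o(e)$, and every other orbit length is the index of a stabiliser $\langle e\rangle\cap C_G(w)$ in the cyclic group $\langle e\rangle$, hence divides $o(e)$. So $R_e$ is regular.

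When the intersection is $\{1,-{\bf 1}\}$ (Lemma \ref{lc2}), note that $(-{\bf 1}_n;(1))$ is central in all of $G=B_n$, and also in $D_n$ when $n$ is even, which is the only case where Lemma \ref{lc2} classes occur in $D_n$. Hence it lies in $C_G(w)$ for every $w\in C$, so the kernel of $\langle e\rangle\to{\rm Sym}(C)$ contains $(-{\bf 1}_n;(1))$. The $R_e$-orbit of $z$ then has length $o(e)/2$, which equals the order of $R_e$ as a permutation of $C$, because that order is $|\langle e\rangle/(\langle e\rangle\cap\bigcap_{w} C_G(w))|$. Every orbit length is the index of a subgroup of $\langle e\rangle$ containing this kernel, so it divides $o(e)/2$. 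Thus $R_e$ is regular. This is exactly the content of Lemma \ref{union}, using that the group $Z(\langle C_i\rangle)$ that actually appears is central in $G$.

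The main obstacle is the hypothesis $Z(\langle C_i\rangle)\le Z(\langle C\rangle)$ of Lemma \ref{union}. Table \ref{table} shows $Z(\langle C_i\rangle)$ can be large, for example $(C_2)^n_0$ or $(C_2)^n$ for diagonal classes, and such subgroups need not be central in $\langle C\rangle$. We will avoid this by using only the intersection $\langle e\rangle\cap C_G(z)$ actually produced by the lemmas, which is contained in $Z(G)$, rather than the full $Z(\langle C_i\rangle)$.

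The remaining case is the diagonal classes $C_i=\{({\bf 1}_{n-m},-{\bf 1}_m;(1))^G\}$. These are not covered by the lemmas, since $\langle C_i\rangle$ is abelian there. Each such element has order $2$, so $R_{x_i}$ is an involution or the identity on $C$ and is automatically regular. Combining these cases shows that every $R_{x_i}$ is regular, which proves the theorem.
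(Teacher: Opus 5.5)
Your proposal is correct and follows the paper's proof step for step. Like the paper, you check $n\le 4$ by computation, note that central classes give the identity permutation, and handle classes with nonabelian $\langle C_i\rangle$ via the single-class pairs from Lemmas \ref{lc1}--\ref{lc3}, whose intersection lies in $Z(G)$; the paper expresses this as $Z(\langle C_i\rangle)\le Z(\langle C\rangle)$ via Table \ref{table} and Lemma \ref{union}, and your orbit--stabiliser check simply unpacks that lemma, while the abelian noncentral diagonal classes are settled in both arguments by $o(x_i)=2$.
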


\begin{proof}
The result is checked using computations for $B_n, D_n$ with $n\leq 4$. Let $n\geq 5$ and $x_i \in C_i$. If $C_i=\{{(\bf 1}_n; (1))\}$ or $\{{(-\bf 1}_n; (1))\}$ then $R_{x_i} \in {\rm Sym}(C)$ is the identity permutation. So for the nontrivial case, we can assume that for some $j$, the subgroup $\langle C_j \rangle$ is nonabelian. It is enough to show that $R_{x_i} \in {\rm Sym}(C)$ is a regular permutation. Using Table \ref{table}, we get if $\langle C_i \rangle$ is nonabelian then $ Z(\langle C_i \rangle) \leq Z(\langle C \rangle)$ so $R_{x_i} \in {\rm Sym}(C)$ is a regular permutation using Lemma \ref{union}. If $\langle C_i \rangle$ is abelian and $C_i$ is noncentral, then $o(x_i)=2$ and $R_{x_i} \in {\rm Sym}(C)$ has order $2$ due to existence of $C_j$ with $\langle C_j \rangle$ nonabelian. Hence, the result follows. 
\end{proof}

In the context of Lemma \ref{union}, we note that the conjugacy classes of a finite group do not always satisfy $Z(\langle C_i \rangle) \leq Z(\langle C\rangle )$. However, that does not mean $R_{x_i} \in {\rm Conj}(G, C)$ is not regular. As an example, we have the Theorem \ref{unionbn} with a non central $C_i$ such that $\langle C_i \rangle$ is abelian.

Two conjugacy classes of a group $G$ are called $z$-conjugate if the centralizers of their representatives are conjugate subgroups. The $z$-conjugate conjugacy classes of symmetric groups and finite Coxeter groups are determined in \cite{BKS} and \cite{coxeter_KS}, respectively. In the following, we show that the union of $z$-conjugate conjugacy classes satisfies the property of Lemma \ref{union}.

\begin{lemma}\label{zconjugate}
Let $C=\bigcup\limits_{i=1}^{k}C_i$ be the union of distinct pairwise  $z$-conjugate (or $z$-equivalent) conjugacy classes. Then $Z(\langle C_i \rangle) \subseteq Z(\langle C\rangle)$ for all $1 \leq i \leq k$.
\end{lemma}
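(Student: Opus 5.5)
The plan is to compare, class by class, the families of centralizers of the elements of the $C_j$, and to use $z$-conjugacy to show that all the $C_j$ produce exactly the same family. The conclusion will then follow because an element of $Z(\langle C_i\rangle)$ already lies in $\langle C\rangle$ and, as we will show, centralizes every generator of $\langle C\rangle$.

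\textbf{Step 1: the centralizer family is independent of $j$.} For each $j$ define $\mathcal{F}_j=\{C_G(y): y\in C_j\}$. Fix representatives $x_j\in C_j$. By $z$-conjugacy there is $g\in G$ with $C_G(x_j)=gC_G(x_i)g^{-1}$. Since $gC_G(x_i)g^{-1}=C_G(gx_ig^{-1})$, we get $C_G(x_j)=C_G(gx_ig^{-1})$.

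Now let $y'=hx_jh^{-1}$ be an arbitrary element of $C_j$. Conjugating the previous equality by $h$ gives
\[
C_G(y')=hC_G(x_j)h^{-1}=C_G\bigl(hgx_ig^{-1}h^{-1}\bigr),
\]
and $hgx_ig^{-1}h^{-1}\in C_i$. Hence $\mathcal{F}_j\subseteq\mathcal{F}_i$. The same argument with $i$ and $j$ swapped gives the reverse inclusion, so $\mathcal{F}_j=\mathcal{F}_i$.

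Consequently the subgroup
\[
K=\bigcap_{y\in C_j}C_G(y)
\]
is the same for every $j$. Since $C=\bigcup_j C_j$, this $K$ is exactly the set of elements of $G$ commuting with every element of $C$.

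\textbf{Step 2: conclusion.} Take $w\in Z(\langle C_i\rangle)$. Because $C_i\subseteq\langle C_i\rangle$, the element $w$ commutes with every element of $C_i$, so $w\in\bigcap_{y\in C_i}C_G(y)=K$. By Step 1, $w$ therefore commutes with every element of $C$, and hence with the group $\langle C\rangle$ they generate. Moreover $w\in\langle C_i\rangle\subseteq\langle C\rangle$. Thus $w\in Z(\langle C\rangle)$, which proves $Z(\langle C_i\rangle)\subseteq Z(\langle C\rangle)$.

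The only point needing care is Step 1: one must check that conjugacy of the centralizers of the representatives propagates to every element of $C_j$, and is not just a statement about the chosen $x_j$. The computation $hC_G(x_j)h^{-1}=C_G(hx_jh^{-1})$ handles this. The hypothesis holds for "$z$-equivalent" classes in the same way, as long as that notion means that the centralizers of representatives are conjugate.
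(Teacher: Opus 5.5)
Your proof is correct and takes essentially the same route as the paper. Both arguments observe that the intersection of the centralizers of all elements of $C_j$ is the intersection of all $G$-conjugates of $C_G(x_j)$, so it depends only on the conjugacy class of that centralizer; hence an element of $Z(\langle C_i\rangle)$ centralizes all of $C$, and it already lies in $\langle C\rangle$. The paper phrases this by choosing a representative $g_j$ with $C_G(g_j)=C_G(g_i)$, whereas you phrase it as equality of the families $\mathcal{F}_j$, which amounts to the same computation.
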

\begin{proof}
Let $g_i \in C_i$. For each $j \neq i$, as $C_i$ and $C_j$ are $z$-conjugate, we can choose a representative $g_j \in C_j$ such that $C_G(g_i) = C_G(g_j)$. 

Let $x \in Z(\langle C_i \rangle)$, then $xg = gx$ for all $g \in C_i$, i.e., $x zg_iz^{-1} = zg_iz^{-1}x$ for all $z \in G$. Thus $x \in \bigcap_{z \in G} C_G(zg_iz^{-1})$. But we know that $C_G(zg_iz^{-1}) = zC_G(g_i)z^{-1}$ for all $\in G$. Hence, \[\bigcap_{z \in G} C_G(zg_iz^{-1}) = \bigcap_{z \in G} zC_G(g_i)z^{-1} = \bigcap_{z \in G} zC_G(g_j)z^{-1} = \bigcap_{z \in G}C_G(zg_jz^{-1})\] 

Thus, we have $x \in \bigcap_{z \in G}C_G(zg_jz^{-1})$ and we know that $\langle C_i\rangle \subseteq \langle C\rangle$, so, we get that $x \in Z(\langle C\rangle)$. 
\end{proof}

\begin{theorem}
Let $\{C_i\; ; i\in I\}$ be the set of pairwise $z$-conjugate conjugacy classes such that ${\rm Conj}(G,C_i)$ satisfies the Hayashi property for all $i \in I$. Then for $C=\bigcup\limits_{i\in I} C_i$, ${\rm Conj}(G,C)$ satisfies the Hayashi property.
\end{theorem}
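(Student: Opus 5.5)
The statement should follow by combining Lemma \ref{zconjugate} with the argument of Lemma \ref{union}, which in turn rests on Lemma \ref{ue}. Let $C=\bigcup_{i\in I}C_i$ be a union of pairwise $z$-conjugate classes, and fix $x\in C$. Then $x\in C_i$ for some $i$, and I want $y\in C$ with $\langle x\rangle\cap C_G(y)\subseteq Z(\langle C\rangle)$. Once such a $y$ is found for every $x\in C$, Lemma \ref{ue} makes every $R_x\in{\rm Sym}(C)$ a regular permutation, and that is the Hayashi property for ${\rm Conj}(G,C)$.

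\textbf{Step 1.} Get a good pair inside $C_i$. By hypothesis ${\rm Conj}(G,C_i)$ satisfies the Hayashi property. I would use this, as Lemma \ref{union} does, in the form that provides $e,z\in C_i$ with $\langle e\rangle\cap C_G(z)\subseteq Z(\langle C_i\rangle)$. Since all elements of $C_i$ are conjugate, this transfers to the given $x$: write $x=geg^{-1}$ and set $y=gzg^{-1}\in C_i$. Then
\[\langle x\rangle\cap C_G(y)=g\bigl(\langle e\rangle\cap C_G(z)\bigr)g^{-1}\subseteq gZ(\langle C_i\rangle)g^{-1}=Z(\langle C_i\rangle),\]
where the last equality holds because $\langle C_i\rangle$ is normal in $G$, so its center is characteristic in a normal subgroup and hence normal in $G$.

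\textbf{Step 2.} Enlarge the center. Lemma \ref{zconjugate} gives $Z(\langle C_i\rangle)\subseteq Z(\langle C\rangle)$ for each $i$, because the $C_i$ are pairwise $z$-conjugate. Combining this with Step 1 gives $\langle x\rangle\cap C_G(y)\subseteq Z(\langle C\rangle)$ with $y\in C_i\subseteq C$. Lemma \ref{ue} then finishes the proof, or equivalently Lemma \ref{union} applied to each $i$.

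\textbf{Main obstacle.} The delicate point is Step 1. Lemma \ref{filip} gives the Hayashi property as a consequence of the existence of such a pair, not the converse, so the hypothesis "${\rm Conj}(G,C_i)$ satisfies the Hayashi property" has to be read as supplying the pair, exactly as Lemma \ref{union} implicitly does. If one insists on using only the regularity of $R_x\in{\rm Sym}(C_i)$, I would argue directly instead. The cycle of $c\in C$ under $R_x$ has length $[\langle x\rangle:\langle x\rangle\cap C_G(c)]$. Regularity in ${\rm Sym}(C_i)$ yields some $y\in C_i$ whose stabilizer $\langle x\rangle\cap C_G(y)$ is contained in every other stabilizer $\langle x\rangle\cap C_G(c)$ for $c\in C_i$, since in a cyclic group divisibility of indices is inclusion of subgroups. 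I would then need this minimal stabilizer to lie in $\langle x\rangle\cap C_G(c)$ for every $c\in C$, not just $c\in C_i$. This is where $z$-conjugacy enters: choosing $c_j\in C_j$ with $C_G(c_j)=C_G(c_i)$ shows that the family of centralizers $\{C_G(c):c\in C_j\}$ coincides with $\{C_G(c):c\in C_i\}$. Hence the stabilizers in $\langle x\rangle$ of points of $C_j$ are the same as those of points of $C_i$, so the longest cycle length of $R_x$ on $C_i$ is a multiple of every cycle length on all of $C$.
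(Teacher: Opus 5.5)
Your proposal is correct and is essentially the paper's proof: the paper's argument is exactly Lemma~\ref{union} (which rests on Lemma~\ref{ue}) combined with Lemma~\ref{zconjugate}, and it makes the same tacit reading of the Hayashi hypothesis on $C_i$ as supplying a good pair. That reading is harmless. By the orbit--stabilizer count you give at the end, the order of $R_e$ on $C_i$ is $[\langle e\rangle:\langle e\rangle\cap Z(\langle C_i\rangle)]$, so any $y$ on a longest cycle satisfies $\langle e\rangle\cap C_G(y)=\langle e\rangle\cap Z(\langle C_i\rangle)$; thus the converse of Lemma~\ref{filip} holds, and both your main route and your direct fallback argument go through.
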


\begin{proof}
The proof follows from Lemma \ref{union} and Lemma \ref{zconjugate}.
\end{proof}

%% file: Exceptional.tex
\section{Exceptional Coxeter Groups}\label{section.exceptional}
In this section, we show that for the exceptional Coxeter groups $G$, namely $E_6, E_7, E_8, F_4, H_3$, and $H_4$, with conjugation closed subset $C$, ${\rm Conj}(G,C)$ satisfies the Hayashi property. We note that for an exceptional group $G$ with $C$ as its conjugacy class, the subgroup $Z(\langle C \rangle)$ is trivial or equal to $Z(G)$. Moreover, we have $Z(N)=\{1\}$ or $\{\pm 1\}$ for all normal subgroups $N$ of $G$, where $-1$ represents the central element of order $2$. Therefore, we can implement the following \texttt{GAP} \cite{GAP4} function for verification.

\begin{center}
\begin{framed}
\begin{verbatim}
# Input a list of non central conjugacy class [c1,c2,...] of a pre-defined group G 
# The function returns a list of {e,z} satisfying the required property.

SuitablePairs:=function(classes)
local u,A,c,e,ord,m,powers,z,IsSuitable,i;
u:=Elements(Union(classes));
A:=[];
for c in classes do
e:=Representative(c);
ord:=Order(e);
m := ord/2;
if IsOddInt(ord) or ord=2 then
    powers:=[1..ord-1];
elif IsEvenInt(ord) and e^m in Center(G) then
    powers:=Difference([1..ord-1],[m]);
else
    powers:=[1..ord-1];
fi;
for z in u do
IsSuitable:=true;
for i in powers do
if e^i*z=z*e^i then
IsSuitable:=false;
break;
fi; od;
if IsSuitable=true then
Add(A,[e,z]);
break;
fi; od; od;
return A;
end;
\end{verbatim}
\end{framed}
\end{center}

\begin{theorem}\label{E}
Let $G$ be the Coxeter group $E_6,E_7,E_8,F_4,H_3,H_4$ and $C$ be a conjugacy class of $G$, then ${\rm Conj}(G,C)$ satisfies the Hayashi property. 

\end{theorem}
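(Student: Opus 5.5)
The plan is to reduce Theorem \ref{E} to Lemma \ref{filip} and verify its hypothesis by a finite computation, using the \texttt{GAP} function \texttt{SuitablePairs}. Each of $E_6,E_7,E_8,F_4,H_3,H_4$ is a finite group with an explicit permutation or matrix realisation in \texttt{GAP}. So for every conjugacy class $C$ it suffices to exhibit $e,z\in C$ with $\langle e\rangle\cap C_G(z)\subseteq Z(\langle C\rangle)$.

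First I would dispose of the trivial cases. If $C$ is central (the identity class, or $\{-1\}$ when $Z(G)=\{\pm1\}$, which happens for $E_7,E_8,F_4,H_3,H_4$), then $R_x$ is the identity and the property is immediate. Next I would justify the remark at the start of the section: every normal subgroup $N$ of these groups has $Z(N)\in\{1,Z(G)\}$. This follows from the known normal subgroup structure. For example, $W(E_6)\cong O_6^-(2)$ has only the index-two simple-by-nothing subgroup; $W(E_7)\cong C_2\times Sp_6(2)$; $W(E_8)=2.O_8^+(2).2$; $H_3\cong C_2\times A_5$; and $F_4$, $H_4$ have small lists of normal subgroups, which can be read off in \texttt{GAP} via \texttt{NormalSubgroups}. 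Consequently $Z(\langle C\rangle)$ is either trivial or $Z(G)$. The condition of Lemma \ref{filip} therefore becomes the following, with $m=o(e)/2$:
\begin{itemize}
\item if $o(e)$ is even and $e^{m}=-1\in Z(G)$, then $e^i$ must not commute with $z$ for all $1\le i<o(e)$ with $i\ne m$;
\item otherwise, $e^i$ must not commute with $z$ for all $1\le i<o(e)$.
\end{itemize}
This is exactly the test \texttt{SuitablePairs} performs. The function is slightly conservative, since it always allows the central power. But allowing $i=m$ when $e^m=-1$ is only legitimate if $-1\in Z(\langle C\rangle)$. So I would additionally check $-1\in\langle C\rangle$ whenever a pair is found only by exempting $i=m$. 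Alternatively, I would note that $-1=e^m\in\langle C\rangle$ automatically, so $-1\in Z(\langle C\rangle)$ holds anyway.

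Then I would run \texttt{SuitablePairs} on the list of all noncentral classes of each group, one class at a time, and confirm that the returned list contains a pair for every class. The main obstacle is computational. $W(E_8)$ has order $696729600$ and $112$ classes, so enumerating \texttt{Elements} of a large class is infeasible. Instead I would iterate $z$ over random conjugates $e^g$ of the representative, testing commutation with the finitely many powers $e^i$, and stop at the first success. Since a suitable $z$ is typically generic, a random search should succeed quickly. The theorem then follows by Lemma \ref{filip}.
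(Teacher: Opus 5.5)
Your proposal is correct and follows the paper's approach: reduce to Lemma \ref{filip} and run \texttt{SuitablePairs} over the noncentral classes, restricting the search for $z$ to a manageable subset of $C$ for $E_8$. The only difference is that you use random conjugates of $e$, while the paper uses the conjugates $geg^{-1}$ with $g$ in a fixed class $D$ of size $4480$. Your observation that $e^m=-1$ lies in $\langle C\rangle$, and hence in $Z(\langle C\rangle)$, correctly justifies the code's exemption of the central power; that exemption is a relaxation rather than a conservative choice. It follows that any pair the code returns is a valid witness even without the normal-subgroup analysis.
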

\begin{proof}
Let $C$ be a conjugacy class of $G$. We can assume $C$ to be such that $\langle C \rangle$ is nonabelian. We can identify $e,z \in C$ satisfying $\langle e \rangle \cap C_G(z) \leq Z(\langle C \rangle)$ in a few seconds using the above code for conjugacy classes of all exceptional groups except for $E_8$. For the case of $E_8$, instead of searching for $z$ in $C$, we search in the subset $\{geg^{-1}; g \in D\} \subset C$, where D is a conjugacy class of size $4480$, and we can find the desired pair.
\end{proof}

\begin{theorem}
Let $G$ be one of the Coxeter groups $E_6,E_7,E_8,F_4,H_3,H_4$, and $C$ be the conjugation closed subset of $G$. Then ${\rm Conj}(G,C)$ satisfies the Hayashi property.
\end{theorem}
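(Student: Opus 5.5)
We reduce to a statement about the individual conjugacy classes contained in $C$. Write $C=\bigcup_i C_i$ and pick $x_i\in C_i$. The right translations $R_g$ with $g\in C_i$ are all conjugate in ${\rm Sym}(C)$ through the maps $\phi_h|_C$. Hence it suffices to prove that each $R_{x_i}\in{\rm Sym}(C)$ is a regular permutation. We split into three cases.

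\textbf{Central classes.} If $C_i$ is central, i.e.\ $C_i=\{1\}$ or $C_i=\{-1\}$ when $Z(G)=\{\pm1\}$, then $R_{x_i}$ is the identity permutation and there is nothing to prove.

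\textbf{Noncentral classes with $\langle C_i\rangle$ nonabelian.} By Theorem \ref{E} (computed with the function \texttt{SuitablePairs}), there exist $e,z\in C_i$ with $\langle e\rangle\cap C_G(z)\le Z(\langle C_i\rangle)$. As noted at the start of this section, $Z(\langle C_i\rangle)$ is either trivial or equal to $Z(G)$. We then need $Z(\langle C_i\rangle)\le Z(\langle C\rangle)$, and this is clear. If $Z(\langle C_i\rangle)=1$ it is trivial. If $Z(\langle C_i\rangle)=Z(G)$, then $Z(G)\subseteq G$ centralizes everything, and $Z(G)\le\langle C_i\rangle\le\langle C\rangle$, so $Z(G)\le Z(\langle C\rangle)$. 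Lemma \ref{union}, equivalently Lemma \ref{ue} applied with $y=z$, then gives that $R_{e}$, and hence $R_{x_i}$, is regular in ${\rm Sym}(C)$.

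\textbf{Noncentral classes with $\langle C_i\rangle$ abelian.} In an exceptional irreducible Coxeter group the normal subgroups are $1$, $Z(G)$, and subgroups containing the derived subgroup. The derived subgroup is nonabelian, with the small exception of $H_3\cong A_5\times C_2$, whose proper normal subgroups are $A_5$, $C_2$ and $1$. So an abelian $\langle C_i\rangle$ forces $C_i\subseteq Z(G)$. This case therefore does not occur and needs no argument. I would double-check this by listing normal subgroups in \texttt{GAP}, or simply by running \texttt{SuitablePairs} on every class of each group and confirming that $Z(\langle C_i\rangle)$ lies in $Z(G)$.

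The main obstacle is the second case: ensuring that the pair found inside $C_i$ stays valid when the ambient set grows to $C$. Once we know that $Z(\langle C_i\rangle)\in\{1,Z(G)\}$ (from the remark on normal subgroups preceding the code), the containment $Z(\langle C_i\rangle)\le Z(\langle C\rangle)$ is immediate as shown above. The only remaining work is to make sure that the $E_8$ search restricted to $\{geg^{-1}: g\in D\}$ really does cover every nonabelian class, which Theorem \ref{E} already asserts.
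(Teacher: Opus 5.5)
\textbf{Overall.} Your proof follows the paper's route. You split $C$ into classes, note that central classes give the identity translation, and for classes with $\langle C_i\rangle$ nonabelian you reuse the pair from Theorem \ref{E} via Lemma \ref{union}. Your explicit check that $Z(\langle C_i\rangle)\le Z(\langle C\rangle)$ uses $Z(\langle C_i\rangle)\in\{1,Z(G)\}$ together with $Z(G)\le\langle C_i\rangle\le\langle C\rangle$. This is a real improvement, because the paper invokes Lemma \ref{union} without verifying that hypothesis.

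\textbf{The faulty step.} Your justification that a noncentral class cannot generate an abelian subgroup is wrong as written. You claim every normal subgroup of an exceptional Coxeter group is $1$, $Z(G)$, or contains the derived subgroup. This fails for $F_4$:
\begin{itemize}
\item $W(F_4)$ is solvable of order $1152$.
\item $W(F_4)/W(F_4)'\cong C_2\times C_2$, since $s_1\sim s_2$ and $s_3\sim s_4$ in the abelianization. Hence $|W(F_4)'|=288$.
\item The long-root reflections generate a normal subgroup $W(D_4)=(C_2)^4_0\rtimes S_4$. Its characteristic subgroup $O_2(W(D_4))=(C_2)^4_0\rtimes V_4$, of order $32$, is therefore normal in $W(F_4)$.
\item This subgroup of order $32$ cannot contain the derived subgroup of order $288$.
\end{itemize}
Also, $H_3$ is not an exception as you phrase it: its derived subgroup $A_5$ is nonabelian.

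\textbf{The fix.} Your conclusion is still true, and you need no classification of normal subgroups to get it. The remark you already used in the second case says $Z(N)\in\{1,Z(G)\}$ for every normal subgroup $N$ of $G$. Apply it to $N=\langle C_i\rangle$: if $N$ is abelian, then $N=Z(N)\subseteq Z(G)$, so $C_i$ is central. With this replacement your proof is complete, relying on the computational input of Theorem \ref{E} exactly as the paper does.
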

\begin{proof}
Let $C=\bigcup\limits_{i}C_i$ be the union of conjugacy classes. If $Z(G)=\{1\}$ and  $\langle C_i \rangle$ is abelian, then $C_i=\{1\}$ and if $Z(G)=\{\pm 1\}$ and $\langle C_i \rangle$ are abelian, then $C_i=\{1\}$ or $\{-1\}$. For the nontrivial case, we can assume that there is at least one $C_i$ with nonabelian $\langle C_i \rangle$.  If $\langle C_i \rangle$ is nonabelian, then the same pair $e,z \in C_i$ from Theorem \ref{E}, also works for $C$ using Theorem \ref{union}. If $\langle C_i \rangle$ is abelian, then for $x_i \in C_i$, $R_{x_i} \in {\rm Sym}(C)$ is identity permutation.
\end{proof}

We have proved that for all finite irreducible Coxeter groups $G$, with a conjugacy closed subset $C$, ${\rm Conj}(G,C)$ satisfies the Hayashi property. However all groups do not satisfy this property (see Example \ref{example}). Therefore, we state the following problem.

\begin{problem}
Classify finite groups $G$ such that ${\rm Conj}(G,C)$ satisfies the Hayashi property for all conjugation closed subsets $C$ of $G$.
\end{problem}

\medskip \noindent \textbf{Acknowledgments.}
The third named author acknowledges the support of the IISER Mohali institute postdoctoral fellowship during this work.